\newcommand{\Z}{\mathbb{Z}}
\newcommand{\R}{\mathbb{R}}
\newcommand{\C}{\mathbb{C}}
\newcommand{\tensor}{\otimes}
\newcommand{\Tensor}{\bigotimes}
\newcommand{\vect}[1]{\mathbf{#1}}
\newcommand{\scalprod}[1]{\left<  #1 \right> }
\newcommand{\noiseprod}[1]{\left<  #1 \right>_{\!\mu } }
\DeclareMathOperator*{\E}{\mathbb{E}}
\DeclareMathOperator*{\Var}{Var}
\DeclareMathOperator{\Inf}{Inf}
\newcommand{\eps}{\epsilon}
\theoremstyle{plain}
\newtheorem{theorem}{Theorem}[section]
\newtheorem{lemma}[theorem]{Lemma}
\newtheorem{corollary}[theorem]{Corollary}
\newtheorem{question}[theorem]{Question}
\newtheorem{proposition}[theorem]{Proposition}
\theoremstyle{definition}
\newtheorem{definition}[theorem]{Definition}
\newtheorem{fact}[theorem]{Fact}
\newtheorem{remark}[theorem]{Remark}
\newcommand{\ignore}[1]{}
\newcommand{\sectlabel}[1]{\label{section:#1}}
\newcommand{\sectref}[1]{\mbox{Section \ref{section:#1}}}
\newcommand{\theoremlabel}[1]{\label{theorem:#1}}
\newcommand{\theoremref}[1]{\mbox{Theorem \ref{theorem:#1}}}
\newcommand{\lemmalabel}[1]{\label{lemma:#1}}
\newcommand{\lemmaref}[1]{\mbox{Lemma \ref{lemma:#1}}}
\newcommand{\proplabel}[1]{\label{proposition:#1}}
\newcommand{\propref}[1]{\mbox{Proposition \ref{proposition:#1}}}
\newcommand{\corollabel}[1]{\label{corollary:#1}}
\newcommand{\corolref}[1]{\mbox{Corollary \ref{corollary:#1}}}
\newcommand{\factlabel}[1]{\label{fact:#1}}
\newcommand{\factref}[1]{\mbox{Fact \ref{fact:#1}}}
\newcommand{\questlabel}[1]{\label{conjecture:#1}}
\newcommand{\questref}[1]{\mbox{Question \ref{conjecture:#1}}}
\newcommand{\deflabel}[1]{\label{definition:#1}}
\newcommand{\defref}[1]{\mbox{Definition \ref{definition:#1}}}
\newcommand{\eqnlabel}[1]{\label{equation:#1}}
\newcommand{\eqnref}[1]{\mbox{Equation (\ref{equation:#1})}}
\newcommand{\figlabel}[1]{\label{figure:#1}}
\newcommand{\figref}[1]{\mbox{Figure \ref{figure:#1}}}
\begin{document}
\title{Noise Correlation Bounds for Uniform Low Degree Functions}
  \author{ Per Austrin\thanks{ E-mail: \texttt{austrin@kth.se}.
  Research funded by ERC Advanced investigator grant 226203 and a
  grant from the Mittag-Leffler Institute.  Work done in part while
  the author was visiting U.C.\ Berkeley under a grant from the
  Swedish Royal Academy of Sciences.
    }\\
    KTH -- Royal Institute of Technology\\
    Stockholm, Sweden
    \and
    Elchanan Mossel\thanks{
      E-mail: \texttt{mossel@stat.berkeley.edu}.
Research supported by BSF grant 2004105, NSF CAREER award DMS 0548249, DOD ONR grant N0014-07-1-05-06 and ISF grant 1300/08} \\
U.C.\ Berkeley, U.S.A And \\ Weizmann Institute of Science, Rehovot, Israel\\}

\maketitle

\begin{abstract}
We study correlation bounds under pairwise independent distributions
for functions with no large Fourier coefficients.  Functions in which
all Fourier coefficients are bounded by $\delta$ are called
$\delta$-{\em uniform}.  The search for such bounds is motivated by their potential applicability to hardness
of approximation, derandomization, and additive combinatorics.

In our main result we show that $\E[f_1(X_1^1,\ldots,X_1^n) \ldots f_k(X_k^1,\ldots,X_k^n)]$ is close to
$0$ under the following assumptions:
\begin{itemize}
\item
The vectors $\{ (X_1^j,\ldots,X_k^j) : 1 \leq j \leq n\}$ are i.i.d, and for each $j$
the vector $(X_1^j,\ldots,X_k^j)$ has a pairwise independent distribution.

\item
The functions $f_i$ are uniform.
\item
The functions $f_i$ are of low degree.
\end{itemize}

We compare our result with recent results by the second
author for low influence functions and to recent results in additive combinatorics using the
Gowers norm. Our proofs
extend some techniques from the theory of hypercontractivity to a
multilinear setup.

\end{abstract}

\clearpage

\section{Introduction}
\subsection{Functionals of Pairwise Independent Distributions}
\sectlabel{smoothness}

In recent years there has been an extensive study of conditions satisfied by functions $f_1,\ldots,f_k$ which guarantee that
\begin{equation} \label{eq:corr_eq}
\E[f_1(X_1) \cdots f_k(X_k)] \approx \prod_{i=1}^k \E[f_i(X_i)],
\end{equation}
for certain probability distributions over $(X_1,\ldots,X_k)$ that are
pairwise independent.  Recall that the random vector $(X_1,\ldots,X_k)$
is {\em pairwise independent} if for all $1 \leq i < j \leq k$ the
random variables $X_i$ and $X_j$ are independent.  In the current
paper we will consider this problem under the additional assumption
that for all $1 \leq i \leq k$ the random variable $X_i$ is an $n$
dimensional vector $X_i = (X_i^1,\ldots,X_i^n) \in \Omega^n$ and that
$(X_1^j,\ldots,X_k^j)$ follow the same (pairwise independent)
distribution $\mu$ over $\Omega^k$, independently for each $1 \leq j
\leq n$ (see \figref{randommatrix}).
We further assume that $\Omega$ is a finite probability space.

\begin{figure}[!h]
\begin{center}
\fbox{
\begin{minipage}{10cm}
\[
\]
  $$
  \left(
  \begin{array}{cccccc}
    X_1^1   & \ldots & X_1^j & \ldots & \ldots & X_1^n \\
    \vdots  &        & \vdots&        & & \vdots  \\
    X_i^1   & \ldots & X_i^j & \ldots & \ldots & X_i^n \\
    \vdots  &        & \vdots&        & & \vdots \\
    X_k^1   & \ldots & X_k^j & \ldots & \ldots & X_k^n
  \end{array}
  \right)
  $$

  \caption{The random matrix $X$.  The columns $X^1, \ldots, X^n$ are
    i.i.d.\ random vectors, and the distribution of the column $X^j = (X^j_1, \ldots, X^j_k)^{T}$ is pairwise independent, for each $j \in [n]$.}
  \figlabel{randommatrix}\
  \end{minipage}
  }
  \end{center}
\end{figure}

In most of the paper we focus on the related problem of finding
conditions which guarantee that
\begin{equation}
  \label{eq:basic_eq}
  \E\left[\prod_{i=1}^k f_i(X_i)\right] \approx 0,
\end{equation}
which can be thought of as the special case of (\ref{eq:corr_eq}) when
$\prod \E[f_i] \approx 0$.  In many cases a general bound of
type~(\ref{eq:corr_eq}) is straight-forward to obtain from
(\ref{eq:basic_eq}).

The basic example of a condition implying~(\ref{eq:basic_eq}) is one
of the constituents of the proof of Roth's
theorem~\cite{roth53certain}.\footnote{Roth's original argument
  considers $(X_1,X_2,X_3)$ which is a uniformly chosen $3$-term
  arithmetic progression in $\Z_p$ but the argument extends
  immediately to the setup considered here.}  Indeed, it is not too
hard to show that
\begin{equation} \label{eq:roth}
\left|\E\left[\prod_{i=1}^3 f_i(X_i)\right]\right| \leq \min_{1 \leq i \leq 3} \| \hat{f}_i \|_{\infty}.
\end{equation}
where
\begin{itemize}
\item
$(X_1,X_2,X_3)$ are pairwise independent.
\item
$f_1,f_2,f_3$ are any functions with $\max_{1 \leq i \leq 3} \| f_i \|_2 \leq 1$
and $\hat f_1$, $\hat f_2$, $\hat
f_3$ are their Fourier transforms.
\end{itemize}

Gowers~(\cite{Gowers:01}, Theorem 3.2) generalized~(\ref{eq:roth}) and showed that:
\begin{equation} \label{eq:gowers}
\left|\E\left[\prod_{i=1}^k f_i(X_i)\right]\right| \leq \min_{1 \leq i \leq k} \|f_i\|_{U^{k-1}}
\end{equation}
where
\begin{itemize}
\item
$(X_1,\ldots,X_k)$ is a uniformly chosen $k$-term arithmetic progression in $\Z_p^n$.
\item
The functions $f_i$ are all bounded by $1$.
\item
$\|f\|_{U^d}$ is the $d$'th Gowers norm of $f$ (see \defref{gowersnorm}).
\end{itemize}

Note that the uniform distribution over arithmetic progressions
$X_1,\ldots,X_k$ of length $3 \le k \le p$ defines a pairwise
independent distribution in $(\Z_p^n)^k$.  See also~\cite{GreenTao:08}
and~\cite{gowers10true} where more general results are obtained for
other pairwise independent distributions which are defined by linear
equations.

Apart for the additive context, expressions of the form $\prod_{i=1}^k
f_i(X_i)$ often appear in the study of hardness of approximation in
computer science.  In this context, a natural condition is that the
functions $f_1,\ldots,f_k$ all have {\em low influences}.  For
example, recent results of Samorodnitsky and Trevisan
(\cite{samorodnitsky05gowers}, Lemma 8) show how to utilize the Gowers
norms in order to show that (here, $\Inf_j(f_i)$ is the influence of
$X_i^j$ on $f_i$, see e.g.\ \cite{samorodnitsky05gowers} for the exact
definition):

\begin{equation} \label{eq:TS}
\left| \E\left[\prod_{i=1}^{k} f_i(X_i)\right] \right| \leq
O \left( \sqrt{\max_{1 \leq i \leq k} \max_{1 \leq j \leq n} \Inf_j(f_i)} \right)
\end{equation}
provided that:
\begin{itemize}
\item
$k=2^d$ and $X_1,\ldots,X_{k}$ are the elements of a uniformly chosen $d$-dimensional subspace of $\Z_2^n$
\item The functions $f_i$ are all bounded by $1$, and at least one of
  them has $\E[f_i] = 0$
\end{itemize}

As a special case this result gives a so-called ``inverse theorem''
for the $d$'th Gowers norm showing that any function with large $d$'th
Gowers norm must have an influential variable.  The result also
allowed the authors to obtain computational inapproximability results
for certain constraint satisfaction problems, assuming the so-called
Unique Games Conjecture \cite{khot02power}.  The results of
~\cite{samorodnitsky05gowers} include a more general statement which
applies in any product group.

A more recent result of the second author (\cite{Mossel:09}, Theorem
1.14) derives a bound similar
to~(\ref{eq:TS}) by showing:
\begin{equation} \label{eq:MOS}
\left| \E\left[\prod_{i=1}^{k} f_i(X_i)\right] \right| \leq
\Psi^{-1} \left( \max_{1 \leq i \leq k} \max_{1 \leq j \leq n} \Inf_j(f_i) \right),
\end{equation}
where $\Psi(\eps) = \eps^{O(\log(1/\eps)/\eps)}$, provided that:
\begin{itemize}
\item The distribution $\mu$ of $(X_1^j,\ldots,X_k^j)$ is any {\em
    connected} pairwise independent distribution.  This means that for
  every $x,y$ in the support of the distribution there exists a path
  from $x$ to $y$ in the support that is obtained by flipping one
  coordinate at a time.
\item The functions $f_i$ are all bounded by $1$ and at least one of
  them has $\E[f_i] = 0$.
\end{itemize}
The proof of~(\ref{eq:MOS}) is based on showing that if all functions $f_i$ are of degree at most $d$ then (\cite{Mossel:09}, Theorem 4.1):
\begin{equation} \label{eq:MOS2}
\left| \E\left[\prod_{i=1}^{k} f_i(X_i)\right] \right| \leq
C^d \sqrt{\max_{1 \leq i \leq k} \max_{1 \leq j \leq n} \Inf_j(f_i) }
\end{equation}
for some absolute constant $C$ provided that:
\begin{itemize}
\item
The distribution $\mu$
of $(X_1^j,\ldots,X_k^j)$ is {\em any} pairwise independent distribution.
\item
The functions $f_i$ satisfy $\| f_i \|_2 \leq 1$ for all $i$ and at least one of them has $\E[f_i] = 0$.
\end{itemize}
The bound~(\ref{eq:MOS}) is then derived from~(\ref{eq:MOS2}) by
applying certain truncation arguments.  These results
of~\cite{Mossel:09} do not use any algebraic symmetries or the Gowers
norm.  Rather, they were based on extending Lindeberg's proof of
the CLT~\cite{Lindeberg:22} using invariance and generalizing recent
work~\cite{Rotar:79,MoOdOl:09}.

We note that the results of~\cite{Mossel:09} later implied results by the
authors of this paper~\cite{austrin09approximation} which gave
stronger and more general inapproximability results than those
obtained in~\cite{samorodnitsky05gowers}.  It was further noted
in~\cite{Mossel:09} that many of the additive applications involve
pairwise independent distributions.

\subsection{Our Results}
Motivated by these lines of work in additive number theory and
hardness of approximation we wish to obtain weaker conditions that
guarantee~(\ref{eq:basic_eq}). Indeed our main result,
\theoremref{corrbound_main}, shows that
\begin{equation} \label{eq:main}
\left|\E\left[\prod_{i=1}^k f_i(X_i)\right] \right| \leq C^d \| \widehat{f_1} \|_{\infty} \prod_{i=2}^k \| f_i \|_2
\end{equation}
for some constant $C$ which only depends on the pairwise independent distribution $\mu$, where
\begin{itemize}
\item $\|\widehat{f_1}\|_{\infty} = \max |\widehat{f_1}(\sigma)|$
denotes the size of the largest Fourier coefficient of $f_1$.
\item
$(X_1,\ldots,X_k)$ is pairwise independent as in \figref{randommatrix}.
\item
The functions $f_i$ are of Fourier degree at most $d$. In other words, all of their Fourier coefficients at levels above $d$ are $0$.
\end{itemize}

We also give some basic extensions of this.  As a first simple
corollary we give in \corolref{invariance_expectation} a result of
type~(\ref{eq:corr_eq}) with similar error bounds as our main theorem.
Elaborating on this extension we show in
\corolref{invariance_expectation3} that in the case
when~(\ref{eq:corr_eq}) does not hold, one can find three Fourier
coefficients $\widehat{f_{i_1}}(\sigma_1)$,
$\widehat{f_{i_2}}(\sigma_2)$ and $\widehat{f_{i_3}}(\sigma_3)$ which
are all of non-negligible magnitude, and which ``intersect'' in the
sense that $\sigma_1$, $\sigma_2$ and $\sigma_3$ share some variable
$j \in [n]$.  Results of this type are often useful in applications to
hardness of approximation.

We note that the conditions on the underlying distribution and
uniformity are very weak while the condition on the Fourier degree of
the function is very strong. By a simple application of Hölder's
inequality, we will see in \propref{holder} that the results extend to
functions which are ``almost low-degree'' in the sense that the
high-degree parts have small $\ell_k$ norm.

As mentioned above, the proofs of \cite{Mossel:09} work by first
establishing the result~(\ref{eq:MOS2}) for arbitrary low-degree
polynomials and then performing a truncation argument,
giving~(\ref{eq:MOS}) where the degree requirements have been traded
for an additional requirement on the pairwise independent distribution
(and a requirement that the functions are bounded).  Hence, the work
presented in this paper may be viewed as an important step in
establishing similar results for a wider family of functions.  Note
that our result~(\ref{eq:main}) is strictly stronger
than~(\ref{eq:MOS2}) as the bound is stated in terms of the largest
Fourier coefficient instead of the largest influence (and that it
suffices that only one of the functions has small coefficients, as
opposed to (\ref{eq:MOS2}) where all the functions are required to
have small influences).

A very natural question to ask is to what extent the (rather severe)
degree restriction can be relaxed.  Unfortunately, this restriction
can not be removed completely, since for the pairwise independent
relation corresponding to the Gowers norm, it is known using examples
due to Gowers~\cite{Gowers:98} and Furstenberg and Weiss~\cite{FurstenbergWeiss:96} that there are
functions with large $U^3$ norm but no large Fourier coefficients.
However, it is quite possible that the degree restriction can be
removed provided one is willing to require a bit more of the pairwise
independent distribution.  In particular, if one as in (\ref{eq:MOS})
requires that $\mu$ is \emph{connected}, the counterexample given by
the Gowers norm is excluded.  Such a restriction, while generally too
strong in the additive combinatorics settings, is often quite natural
in applications to hardness of approximation and social choice.

% We elaborate more on this issue in \sectref{ext}.

% To compare our results with~(\ref{eq:MOS}), note
% that~(\ref{eq:MOS}) requires the stronger condition that all the
% $f_i$ have low influences and that the pairwise independent
% distribution has to be connected.
% We also note that the conclusion
% derived in~(\ref{eq:MOS}) is stronger: first, it applies to general (not bounded
% degree) functions and secondly it is shown that $f_1,\ldots,f_k$ are,
% in fact, close to being independent.

\subsection{Applications}

The applications we present mostly concern functions of low Fourier
degree with no large Fourier coefficients. We show that such functions
cannot ``distinguish'' between truly independent distributions and
pairwise independent product distributions.  In particular we show
that such functions defined over $\Z_p^n$ always have small Gowers
norm.  This implies that for functions of low Fourier degree all of
the $U^k$ norms are equivalent for $k \geq 2$. Moreover, such
functions cannot distinguish the uniform distribution over arithmetic
progressions from the uniform distributions over the product space.

\subsection{Proof Idea}

The proof of~(\ref{eq:main}) is based on induction on the degree and
the number of variables. In a way it is similar to inductive proofs
for deriving hyper-contractive estimates for polynomials of random
variables, see, e.g.,~\cite{MoOdOl:09}. Naturally the setup is
different as each polynomial is applied on different random variables.
The pairwise independence property is crucial in the proof as it shows
that certain second order terms vanish.

\subsection{Paper Structure}
In \sectref{harmanalysis} we recall some background in Fourier analysis and noise correlation.
In \sectref{main} we derive the main result and some corollaries.
In \sectref{app} we derive some applications of the main result.
In \sectref{ext} we discuss potential extensions of the main result.

\section{Preliminaries}
\sectlabel{harmanalysis}

\subsection{Notation}

Let $\Omega$ be a finite set and let $\mu$ be a probability
distribution on $\Omega$.  The following notation will be used
throughout the paper.
\begin{itemize}
\item $(\Omega^n, \mu^{\tensor n})$ denotes the product space $\Omega
\times \ldots \times \Omega$, endowed with the product distribution.
\item $\alpha(\mu) := \min
\{\,\mu(x)\,:\,x \in \Omega, \mu(x) > 0\,\}$ denotes the minimum
non-zero probability of any atom in $\Omega$ under the distribution
$\mu$.
\item
$L^2(\Omega, \mu)$ denotes the space of functions from
$\Omega$ to $\C$.  We define the inner product on $L^2(\Omega, \mu)$
by $\scalprod{f,g} := \E_{x \in (\Omega,
\mu)}[f(x)\overline{g(x)}]$, and the $\ell_p$ norm by $\|f\|_p := (\E_{x \in
(\Omega, \mu)}[|f|^p])^{1/p}$.
\end{itemize}

For a probability distribution $\mu$ on $\Omega_1 \times \ldots \times
\Omega_k$ (not necessarily a product distribution) and $i \in [k]$, we
use $\mu_i$ to denote the marginal distribution on $\Omega_i$.  Such
a distribution $\mu$ is said to be pairwise independent if for every
$1 \le i < j \le k$ and every $a \in \Omega_i$, $b \in \Omega_j$ it
holds that
$$\Pr_{x \in (\Omega_1 \times \ldots \times \Omega_k,
\mu)}[x_i = a \wedge x_j = b] = \mu_i(a) \mu_j(b).$$

\subsection{Fourier Decomposition}

In this subsection we recall some background in Fourier analysis that
will be used in the paper.

Let $q$ be a positive integer (not necessarily a prime power), and let
$(\Omega, \mu)$ be a finite probability space with $|\Omega| = q$,
which is non-degenerate in the sense that $\mu(x) > 0$ for every $x
\in \Omega$.  Let $\chi_0, \ldots, \chi_{q-1} : \Omega \rightarrow \C$
be an orthonormal basis for the space $L^2(\Omega, \mu)$ w.r.t.\ the
scalar product $\scalprod{\cdot, \cdot}$.  Furthermore, we require
that this basis has the property that $\chi_{0} = \vect{1}$, i.e., the
function that is identically $1$ on every element of $\Omega$.

We remark that since the choice of basis is essentially arbitrary, one
can take $\chi_0, \ldots, \chi_{q-1}$ to be an $\R$-valued basis
rather than a $\C$-valued one (which can be desirable in the case when
one works exclusively with $\R$-valued functions).  The only place in
the paper where this distinction makes a difference is the final part
of \theoremref{corrbound_main}, where this is stated explicitly.

In the complex valued case when $\mu$ is the uniform distribution we
can take the standard Fourier basis $\chi_y(x) = \exp(2\pi i x y/ q)$
where we identify $\Omega$ with $\Z_q$ in some canonical way.

For $\sigma \in \Z_q^n$, define $\chi_{\sigma}: \Omega^n \rightarrow
\C$ as $\Tensor_{i \in [n]} \chi_{\sigma_i}$, i.e.,
$$
\chi_{\sigma}(x_1, \ldots, x_n) = \prod_{i \in [n]} \chi_{\sigma_i}(x_i).
$$
It is well-known and easy to check that the functions
$\{\chi_\sigma\}_{\sigma \in \Z_q^n}$ form an orthonormal basis for
the product space $L^2(\Omega^n, \mu^{\tensor n})$.  Thus, every
function $f \in L^2(\Omega^n, \mu^{\tensor n})$ can be written as
$$
f(x) = \sum_{\sigma \in \Z_q^n} \hat{f}(\sigma) \chi_\sigma(x),
$$ where $\hat{f}: \Z_q^n \rightarrow \C$ is defined by
$\hat{f}(\sigma) = \scalprod{f, \chi_\sigma}$.  The
most basic properties of $\hat{f}$ are summarized by
\factref{fourier_basic}, which is an immediate consequence of
the orthonormality of $\{\chi_\sigma\}_{\sigma \in \Z_q^n}$.
\begin{fact}
  \factlabel{fourier_basic}
  We have
  \begin{align*}
    \E[fg] &= \sum_{\sigma} \hat{f}(\sigma) \hat{g}(\sigma), &
    \E[f] &= \hat{f}(\vect{0}), &
    \Var[f] &= \sum_{\sigma \ne \vect{0}}\hat{f}(\sigma)^2.
  \end{align*}
\end{fact}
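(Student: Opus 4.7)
The plan is to derive all three identities as direct consequences of (i) the orthonormality of the basis $\{\chi_\sigma\}_{\sigma \in \Z_q^n}$ with respect to $\scalprod{\cdot,\cdot}$ and (ii) the normalization $\chi_{\vect 0} = \vect 1$. Nothing else is needed; this is a purely mechanical unrolling of the definitions.

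First I would dispatch $\E[f] = \hat{f}(\vect 0)$. By definition $\hat f(\vect 0) = \scalprod{f, \chi_{\vect 0}} = \E[f\, \overline{\chi_{\vect 0}}]$, and since $\chi_{\vect 0} \equiv 1$ its complex conjugate also equals $1$, so the right-hand side collapses to $\E[f]$.

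Next, for the Plancherel-type identity $\E[fg] = \sum_\sigma \hat f(\sigma)\hat g(\sigma)$, I would substitute the Fourier expansions of $f$ and $g$, multiply out, interchange the finite sums with expectation, and obtain
$$\E[fg] = \sum_{\sigma,\tau}\hat f(\sigma)\hat g(\tau)\,\E[\chi_\sigma \chi_\tau].$$
In the real-valued setting indicated by the statement (the variance formula uses $\hat f(\sigma)^2$ rather than $|\hat f(\sigma)|^2$, and the paper has already announced that applications can be taken over an $\R$-valued basis), $\E[\chi_\sigma \chi_\tau] = \scalprod{\chi_\sigma, \chi_\tau} = \mathbf 1[\sigma = \tau]$ by orthonormality, so the double sum collapses to the single sum as claimed. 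Finally, the variance identity is obtained by specializing this identity to $g = f$, which gives $\E[f^2] = \sum_\sigma \hat f(\sigma)^2$, and then subtracting $\E[f]^2 = \hat f(\vect 0)^2$ by the first identity and using $\Var[f] = \E[f^2] - \E[f]^2$.

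The only potential obstacle is bookkeeping of complex conjugates if one insists on a $\C$-valued basis: in that generality Plancherel reads $\E[f\overline{g}] = \sum_\sigma \hat f(\sigma)\overline{\hat g(\sigma)}$, and one has to verify that the conjugates cancel in the way the statement claims. Since the paper explicitly flags that the distinction between real- and complex-valued bases only matters in the statement of \theoremref{corrbound_main}, this is a non-issue here and the proof is essentially a one-line appeal to orthonormality.
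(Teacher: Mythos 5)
Your proof is correct and is exactly the ``immediate consequence of orthonormality'' that the paper invokes without writing out; there is no divergence in approach. Your remark about the real- versus complex-valued bookkeeping is a fair observation and consistent with the paper's own caveat that the choice of basis only matters in the final part of \theoremref{corrbound_main}.
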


We refer to the transform $f \mapsto \hat{f}$ as the Fourier
transform, and $\hat{f}$ as the Fourier coefficients of $f$.  We
remark that the article ``the'' is somewhat inappropriate, since the
transform and coefficients in general depend on the choice of basis
$\{\chi_i\}_{i \in \Z_q}$.  However, we will always be working with
some fixed (albeit arbitrary) basis, and hence there should be no
ambiguity in referring to the Fourier transform as if it were unique.
Furthermore, most of the important properties of $\hat{f}$ are
actually basis-independent.  In particular
Definitions~\ref{definition:multiindex}-\ref{definition:lowdegpart} and
\factref{chi_inf_bound} do not depend on the choice of Fourier basis.

Before proceeding, let us introduce some useful notation in relation
to the Fourier transform.

\begin{definition}
  \deflabel{multiindex}
  A \emph{multi-index} is a vector $\sigma \in \Z_q^n$, for some $q$
  and $n$.  The \emph{support} of a multi-index $\sigma$ is $S(\sigma) =
  \{\,i\,:\,\sigma_i > 0\,\} \subseteq [n]$. We extend notations defined for
  $S(\sigma)$ to $\sigma$ in the natural way, and write e.g.\
  $|\sigma|$ instead of $|S(\sigma)|$, $i \in \sigma$ instead of $i
  \in S(\sigma)$, and so on.
\end{definition}

\begin{definition}
  \deflabel{fourierdegree}
  The \emph{(Fourier) degree} $\deg(f)$ of $f \in L^2(\Omega^n, \mu^{\tensor
  n})$ is the infimum of all $d \in \Z$ such that $\hat{f}(\sigma) =
  0$ for all $\sigma$ with $|\sigma| > d$.
\end{definition}

The degree of $f$ is one of its most important properties.  In
general, the smaller $\deg(f)$ is, the more ``nicely behaved'' $f$ is.
When $\deg(f) \le d$, we will refer to $f$ as a \emph{degree-$d$
polynomial} in $L^2(\Omega^n, \mu^{\tensor n})$.

\begin{definition}
  \deflabel{lowdegpart}
  For $f: \Omega^n \rightarrow \C$ and $d \in \Z$, the
  function $f^{\le d}: \Omega^n \rightarrow \C$ is defined by
  $$f^{\le d} = \sum_{|\sigma| \le d} \hat{f}(\sigma) \chi_\sigma.$$
  We define $f^{< d}$, $f^{= d}$, $f^{> d}$ and $f^{\ge d}$
  analogously.
\end{definition}

Another fact which is sometimes useful is the following trivial bound
on the $\ell_{\infty}$ norm of $\chi_{\sigma}$ (recall that
$\alpha(\mu)$ is the minimum non-zero probability of any atom in
$\mu$).

\begin{fact}
  \factlabel{chi_inf_bound}
  Let $(\Omega^n, \mu^{\tensor n})$ be a product space with Fourier
  basis $\{\chi_\sigma\}_{\sigma \in \Z_q^n}$.  Then for any $\sigma \in \Z_q^n$,
  $$ \|\chi_{\sigma} \|_{\infty} \le \alpha(\mu)^{-|\sigma|/2}.
  $$
\end{fact}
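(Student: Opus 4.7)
The plan is to reduce the product bound to a one-coordinate bound via the tensor structure $\chi_\sigma = \bigotimes_{i} \chi_{\sigma_i}$, and then prove the one-coordinate bound by a simple consequence of orthonormality.

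First I would handle the single-coordinate case: I claim that for every $j \in \Z_q$ and every $x \in \Omega$ with $\mu(x) > 0$, $|\chi_j(x)| \leq \alpha(\mu)^{-1/2}$. This follows from the normalization $\|\chi_j\|_2 = 1$, which unfolds to
\[
1 \;=\; \sum_{y \in \Omega} \mu(y)\,|\chi_j(y)|^2 \;\geq\; \mu(x)\,|\chi_j(x)|^2,
\]
so $|\chi_j(x)|^2 \leq 1/\mu(x) \leq 1/\alpha(\mu)$. (Points $x$ with $\mu(x)=0$ are irrelevant for the $\ell_\infty$ norm under the measure $\mu^{\otimes n}$, which is the only norm we care about here.)

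Next I would handle the tensor product: for any $x = (x_1,\ldots,x_n)$ in the support of $\mu^{\otimes n}$, each $x_i$ lies in the support of $\mu$, so
\[
|\chi_\sigma(x)| \;=\; \prod_{i \in [n]} |\chi_{\sigma_i}(x_i)| \;=\; \prod_{i \in S(\sigma)} |\chi_{\sigma_i}(x_i)|,
\]
where I used $\chi_0 \equiv \mathbf{1}$ to drop the factors for $i \notin S(\sigma)$. Applying the single-coordinate bound to each of the $|\sigma|$ remaining factors gives $|\chi_\sigma(x)| \leq \alpha(\mu)^{-|\sigma|/2}$, and taking the supremum over $x$ yields the claim.

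There is essentially no obstacle here; the only minor subtlety is being careful about what ``$\ell_\infty$'' means when $\mu$ may not have full support, but one checks that the bound is only required (and used) on the support of $\mu^{\otimes n}$, and the argument above covers exactly that case.
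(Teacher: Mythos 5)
Your proof is correct; the paper states this as a ``trivial bound'' and gives no proof, and your argument (reduce to one coordinate via the tensor structure, then use $\|\chi_j\|_2 = 1$ to get the pointwise bound $|\chi_j(x)|^2 \le 1/\mu(x) \le 1/\alpha(\mu)$ on the support) is exactly the intended one. Your remark about the $\ell_\infty$ norm only being measured over the support of $\mu^{\otimes n}$ is also the right reading of the claim.
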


\subsection{Noise Correlation}
\sectlabel{noisecorrelation}

In this section we introduce the notion of a noisy inner product and
noise correlation.

Various special cases of noise correlation have been the focus of much
work, as we discuss below.  Informally, the noise correlation between
two functions $f$ and $g$ measure how much $f(x)$ and $g(y)$ correlate
on random inputs $x$ and $y$ which are correlated.  We remark that the
name ``noise correlation'' is a slight misnomer and that ``correlation
under noise'' would be a more descriptive name---we are not looking at
how well a random variable correlates with noise, but rather how well
a collection of random variables correlate with each other in the
presence of noise.

\begin{definition}
  \deflabel{noisyinner} Let $(\Omega, \mu)$ be a product space with
  $\Omega = \Omega_1 \times \ldots \times \Omega_k$, and let $f_1,
  \ldots, f_k$ be functions with $f_i \in L^2((\Omega_i)^n,
  (\mu_i)^{\tensor n})$.  The \emph{noisy inner product} of $f_1,
  \ldots, f_k$ with respect to $\mu$ is
  $$
  \noiseprod{f_1, f_2, \ldots, f_k} = \E\left[ \prod_{i=1}^k f_i \right].
  $$
  The \emph{noise correlation} of $f_1, \ldots, f_k$ with respect to
  $\mu$ is
  $$
  \noiseprod{f_1, f_2, \ldots, f_k} - \prod_{i=1}^k \E\left[f_i\right]
  $$
\end{definition}

As it can take some time to get used to \defref{noisyinner}, let us
write out $\noiseprod{f_1, \ldots, f_k}$ more explicitly.  Let $f_i:
\Omega_i^n \rightarrow \C$ be functions on the product space
$\Omega_i^n$, and let $\mu$ be some probability distribution on
$\Omega = \Omega_1 \times \ldots \times \Omega_k$.  Then,
$$
\noiseprod{f_1, \ldots, f_k} = \E_{X}\left[ \prod_{i=1}^k f_i(X_i) \right],
$$
where $X$ is a $k \times n$ random matrix such that each column of $X$
is a sample from $(\Omega, \mu)$, independently of the other columns,
and $X_i$ refers to the $i$th row of $X$.

The notation $\noiseprod{f_1, \ldots, f_k}$ is a new notation for
quantities studied before in e.g.\ \cite{Mossel:09}, its
applications~\cite{austrin09approximation,raghavendra08optimal} and
additive number theory.  The focus of the current paper is where
$X_1,\ldots,X_k$ are pairwise independent though noisy inner products
are of much interest also in cases for non pairwise independent
distributions including in percolation, theoretical computer science
and social choice, see
e.g.~\cite{benjamini99noise,odonnell03computational,khot07optimal,MoOdOl:09}.

\subsubsection{The Gowers Norm}

An instance of the noisy inner product which has been the focus of
much attention in recent years is the Gowers norm, which we will now
define.  Let $p$ be a prime.  For a function $f: \Z_p^n \rightarrow
\C$ and a ``direction'' $Y \in \Z_p^n$, the ``derivative'' of $f$ in
direction $Y$, $f_Y: \Z_p^n \rightarrow \C$ is defined by $f_Y(X) =
f(X+Y)\overline{f(X)}$.  Repeating, we define $f_{Y_1, \ldots, Y_d}(X)
= (f_{Y_1, \ldots, Y_{d-1}})_{Y_d}(X) = \prod_{S \subseteq [d]}
\mathcal{C}^{|S|+1} f\left(X + \sum_{i \not\in S} Y_i\right)$, where
$\mathcal{C}$ denotes the complex conjugation operator.

\begin{definition}
  \deflabel{gowersnorm}
  Let $f: \Z_p^n \rightarrow \C$.  The $d$'th Gowers norm of $f$,
  denoted $\|f\|_{U^d}$, is defined by
  $$
  \|f\|_{U^d}^{2^d} = \E\left[f_{Y_1, \ldots, Y_{d}}(X) \right],
  $$ where the expected value is over a random $X \in \Z_p^n$ and $d$
  random directions $Y_1, \ldots, Y_d$.
\end{definition}

This norm was introduced by Gowers~\cite{Gowers:01} in a
Fourier-analytic proof of Szemerédi's Theorem \cite{szemeredi75sets}
and has since been used extensively in additive number theory.  The
Gowers norm can be written as a noisy inner product.  Indeed, we can
write
$$
\|f\|_{U^d}^{2^d} = \E\left[\prod_{S \subseteq [d]} g_S(X_S)\right] = \noiseprod{g_{\emptyset}, \ldots, g_{[d]}}
$$ where we define $g_S: \Z_p^n \rightarrow \C$ by $g_S(X) =
\mathcal{C}^{|S|+1} f(X)$, and the collection $(X_S)_{S \subseteq
[d]}$ of random variables is defined by $X_S = X + \sum_{i \not\in S}
Y_i$, for a uniformly random $X \in \Z_p^n$ and independent uniformly
random directions $Y_1, \ldots, Y_d \in \Z_p^{n}$.

\subsubsection{Noisy Inner Products Under Pairwise Independence}

This paper focuses on noisy inner products under pairwise independent
distributions.  The interest in this special case comes from
applications in computer science and additive number theory. We
briefly mention a few of these applications.

\begin{itemize}
\item
In computer science there is interest in pairwise independent distributions in hardness of approximation,
in particular those of small support. See~\cite{austrin09approximation} where the results of~\cite{Mossel:08,Mossel:09} were used to derive hardness results based on pairwise independence.
\item
As mentioned above, the Gowers norm and the Gowers inner-product are
both noisy inner products.  Note that the
collections of vectors $(X + \sum_{i \in S} X_i : S \subseteq [d])$ is
pairwise (in fact $3$-wise as long as $d \ge 2$) independent.

\item
Another noisy inner product that is closely related to additive applications is obtained by considering arithmetic progressions.
For concreteness consider again the case where all the functions are of $\Z_p^n \to \{0,1\}$ and let $k < p$.
Given $k$ such functions $f_1,\ldots,f_k$ we let:
\[
\noiseprod{f_1,\ldots,f_k} = \E\left[\prod_{i=1}^k f_i(i X + Y)\right],
\]
where $X,Y$ are independent and uniformly chosen in $\Z_p^n$ (note
that $iX+Y$ and $jX+Y$ are independent for $1 \le i < j \le k$).  If
$A$ is an indicator of a set then the number of $k$-term progressions
in $A$ is in fact:
\[
p^{2n} \noiseprod{A, A, \ldots, A}.
\]
\end{itemize}

\section{Main Theorem}
\sectlabel{main}

\newcommand{\degtwo}{\deg_{-2}}

In this section, we state and prove our main theorem.  First we
define the parameter which controls how good bounds we get.

\begin{definition}
  Let $f_1, \ldots, f_k$ be a collection of functions.  We denote by
  $\degtwo(f_1, \ldots, f_k)$ the sum of the $k-2$ smallest degrees of
  $f_1, \ldots, f_k$.
\end{definition}

We can now state the main theorem.

\begin{theorem}
  \theoremlabel{corrbound_main} Let $(\Omega, \mu)$ be a pairwise
  independent product space $\Omega = \Omega_1 \times \ldots \times
  \Omega_k$.  There is a constant $C$ depending only on $\mu$ such
  that the following holds.

  Let $f_1, \ldots, f_k$ be functions $f_i \in L^2(\Omega_i^n,
  (\mu_i)^{\tensor n})$.  Denote by $\delta := \max_{\sigma \in
  \Z_q^n} |\hat{f}_1(\sigma)|$ the size of the largest Fourier
  coefficient of $f_1$, and let $D := \degtwo(f_1, \ldots, f_k)$ denote
  the sum of the $k-2$ smallest degrees of $f_1, \ldots, f_k$.  Then,
  $$
  |\noiseprod{f_1, \ldots, f_k}|
  \le C^{D} \delta  \prod_{i=2}^k \|f_i\|_2.
  $$ Furthermore, one can always take $C = \left(k \sqrt{\frac{q-1}{\alpha}} \right)^3$, where $\alpha = \min_i \alpha(\mu_i)$.  If $\mu$ is balanced,
  i.e., if all marginals $\mu_i$ are uniform, then there is a choice
  of complex Fourier basis such that one can take $C = (k \sqrt{q-1})^3$.
\end{theorem}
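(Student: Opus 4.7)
The plan is to induct on $n$; the base case $n = 0$ is immediate since every $f_i$ is a scalar with $\delta = |f_1|$ and $\|f_i\|_2 = |f_i|$. For the inductive step, decompose each $f_i$ in its $n$-th coordinate:
\[
f_i(x^{[n-1]}, x^n) \;=\; \sum_{a \in \Z_q} g_{i,a}(x^{[n-1]})\,\chi_a(x^n), \qquad \hat{g}_{i,a}(\tau) = \hat{f}_i(\tau, a).
\]
Three immediate bookkeeping facts drive the argument: (i) $\sum_a \|g_{i,a}\|_2^2 = \|f_i\|_2^2$ by Parseval; (ii) $\|\hat{g}_{1,a}\|_\infty \le \delta$ for every $a$; and (iii) $\deg(g_{i,a}) \le \deg(f_i) - \mathbf{1}[a \ne 0]$. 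Since the columns of $X$ are i.i.d.\ under $\mu$, integrating out the $n$-th column yields
\[
\langle f_1, \ldots, f_k\rangle_\mu \;=\; \sum_{a \in \Z_q^k} T(a)\,\langle g_{1,a_1}, \ldots, g_{k,a_k}\rangle_\mu^{(n-1)}, \quad T(a) := \E_{Y \sim \mu}\Big[\prod_i \chi_{a_i}(Y_i)\Big],
\]
where $\langle\cdot\rangle_\mu^{(n-1)}$ is the noisy inner product on the first $n-1$ columns.

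Pairwise independence of $\mu$ annihilates every term whose support $S(a) := \{i : a_i \ne 0\}$ has size $1$ or $2$: for $|S(a)| = 1$, $T(a)$ collapses to a single marginal $\E[\chi_{a_j}] = 0$; for $|S(a)| = 2$, the two nontrivial characters factor by pairwise independence and each has mean $0$. Only the main term $a = \vect{0}$ and the error terms with $|S(a)| \ge 3$ contribute. The main term is bounded by the inductive hypothesis on $n-1$ coordinates (with unchanged $D$ and $\delta$) by $C^D \delta \prod_{i=2}^k \|g_{i,0}\|_2$. For an error term with support of size $s \ge 3$, the IH on $n-1$ coordinates applies to $\langle g_{1,a_1}, \ldots, g_{k,a_k}\rangle_\mu^{(n-1)}$; a simple combinatorial argument---at most two of the $s$ degree drops can be absorbed by the top two degrees---shows the new parameter $D'$ satisfies $D' \le D - (s-2)$, giving a per-term bound $C^{D-s+2}\delta \prod_{i=2}^k \|g_{i,a_i}\|_2$.

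Combining with $|T(a)| \le \alpha^{-s/2}$ from \factref{chi_inf_bound} (or $|T(a)| \le 1$ in the balanced complex case) and the Cauchy--Schwarz bound $\sum_{a \ne 0} \|g_{i,a}\|_2 \le \sqrt{q-1}\,\epsilon_i$ with $\epsilon_i := (\|f_i\|_2^2 - \|g_{i,0}\|_2^2)^{1/2}$, the total contribution of each support-$S$ error family is bounded by $C^{D-s+2}\delta\,\kappa^{s} \prod_{i \in S\setminus\{1\}}\epsilon_i \prod_{i \notin S\cup\{1\}}\beta_i$, where $\beta_j := \|g_{j,0}\|_2$ and $\kappa$ is a constant depending only on $\mu$. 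Closing the induction thus reduces to the scalar inequality
\[
\prod_{i=2}^k \beta_i \;+\; \sum_{\substack{S \subseteq [k] \\ |S| \ge 3}} C^{-(|S|-2)}\kappa^{|S|}\prod_{i \in S\setminus\{1\}}\epsilon_i \prod_{i \notin S\cup\{1\}}\beta_i \;\le\; \prod_{i=2}^k \|f_i\|_2,
\]
which, using the Pythagorean identity $\beta_i^2 + \epsilon_i^2 = \|f_i\|_2^2$ and a weighted Cauchy--Schwarz step, holds as soon as $C$ is of order $k^3 \kappa^3$. The explicit $C = (k\sqrt{(q-1)/\alpha})^3$ in general, and the sharper $C = (k\sqrt{q-1})^3$ in the balanced case (where the factor $\alpha^{-s/2}$ is absent), are calibrated precisely to this inequality.

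The main obstacle will be this final accounting step: absorbing the $|S| \ge 3$ error contributions---especially the case $1 \in S$, in which the trivial sum $\sum_{a_1 \ne 0} 1$ contributes a bare factor of $q-1$ with no associated $\ell_2$-weight---into the single spare factor of $C$ purchased by each degree drop. The pairwise-independence-induced vanishing of the $|S| \le 2$ terms is exactly what makes such an absorption possible; the remaining balancing between the ``low-frequency'' parts $\beta_i$ and the ``high-frequency'' parts $\epsilon_i$ is analogous to the inductive step in classical hypercontractive estimates, as the paper's proof sketch indicates.
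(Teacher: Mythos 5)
Your proposal follows essentially the same route as the paper's proof: induct on $n$, split each $f_i$ in one coordinate, use pairwise independence to kill all contributions with support size $1$ or $2$, apply the induction hypothesis with a degree drop of at least $|S|-2$ for $|S|\ge 3$, and close with a combinatorial inequality. The only structural difference is cosmetic: you expand $f_i = \sum_{a} g_{i,a}\chi_a$ in one pass (over all $q$ characters of the chosen column), whereas the paper first writes $f_i = g_i + h_i$ (the part independent of, resp.\ depending on, that column), parametrizes the expansion of the noisy inner product by the set $T$ of indices that contribute their $h$-part, and only then further expands each $h_i$ into its characters inside the proof of \lemmaref{fourier_inductive_step}. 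Your one-step version is arguably cleaner bookkeeping and makes the index set $S(a)$ play the role of the paper's $T$.

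Your flagged concern about the case $1 \in S$ is a real subtlety, and in fact the paper's Lemma~\ref{lemma:fourier_inductive_step} is slightly loose there: when $1 \in T$, the sum over $\sigma_1 \in [q-1]$ contributes a bare factor of $q-1$ (since $\|h_{1,\sigma_1}\|_2$ does not appear in the product over $i\ge 2$), so the correct power is $(q-1)^{(|T|+1)/2}$ rather than the stated $(q-1)^{|T|/2}$. This costs one extra $\sqrt{q-1}$ per term and hence only affects the explicit value of $C$; it is easily absorbed by a slightly larger constant (or by taking $\kappa = (q-1)/\sqrt{\alpha}$ in your notation). So your instinct that this is ``the main obstacle'' and that it can nonetheless be absorbed into $C$ is exactly right.

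One small point of comparison on the finishing step: you invoke ``a weighted Cauchy--Schwarz step,'' but the paper's closing argument is in fact more elementary. After normalizing and setting $\tau = \max_{i\ge 2}\epsilon_i$, it uses $|S\setminus\{1\}|\ge 2$ for $|S|\ge 3$ to pull out $\tau^2$, crudely bounds everything else by $1$, and then finishes with the scalar inequality $\sqrt{1-\tau^2}+\tau^2/2\le 1$ together with $\sum_{i=3}^k\binom{k}{i}k^{-i}\le 1/2$. Since you leave this step as a claim rather than carrying it out, this is worth knowing: no Cauchy--Schwarz is needed, and the stated $C = \Theta(k\kappa)^3$ falls out directly from $\sum_{s\ge 3}\binom{k}{s}(\kappa C^{-1/3})^s\le 1/2$. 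With that caveat, your plan is sound and would yield the theorem.
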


We remark that, while \theoremref{corrbound_main} is very limited
because of its requirement on the degrees of the $f_i$'s, the lack of
any other assumptions is nice.  In particular, we do not need to
assume that the $f_i$'s are bounded, nor do we need any assumptions on
$\mu$ beyond the pairwise independence condition.

\begin{proof}
  We prove this by induction over $n$.  If $n = 0$, the statement is
  easily verified (either $D = -\infty$, or $D = 0$, depending on
  whether one of the functions is $0$ or not).\footnote{We point out
  that $f_i \in L^2(\Omega_i^0, (\mu_i)^{\tensor 0})$ does not
  formally make sense.  However in this case, the appropriate way to
  view $f_i$ is as an element of $L^2(\Omega_i^N, (\mu_i)^{\tensor
  N})$ which only depends on the $n$ first coordinates, for some large
  value of $N$.  In particular, for the case $n=0$ we have that $f_i$
  is a constant.}

  Write $f_i = g_i + h_i$, where
  \begin{align*}
    g_i &= \sum_{1 \not\in \sigma} \hat{f}(\sigma) \chi_\sigma  &
    h_i &= \sum_{1 \in \sigma} \hat{f}(\sigma) \chi_\sigma,
  \end{align*}
  i.e., $h_i$ is the part of $f_i$ which depends on $X^1$ (the first
  column of $X$), and $g_i$ is the part which does not depend on
  $X^1$.  Then
  $$
  \noiseprod{f_1, \ldots, f_k} = \E_X \left[\prod f_i(X_i)\right] = \sum_{T \subseteq [k]}
  \E_{X}\left[\prod_{i \not\in T} g_i(X_i) \prod_{i \in T}
    h_i(X_i) \right].
  $$
  For $T \subseteq [k]$, define
  $$
  E(T) = \E_{X}\left[\prod_{i \not\in T} g_i(X_i) \prod_{i \in T} h_i(X_i) \right].
  $$
  The key ingredient will be the following Lemma, bounding $|E(T)|$.

  \begin{lemma}
    \lemmalabel{fourier_inductive_step}
    Let $\emptyset \subseteq T \subseteq [k]$.  Then:
    \begin{itemize}
    \item If $T = \emptyset$, we have
      $$
      |E(T)| \le C^{D} \delta \prod_{i=2}^k \|g_i\|_2 .
      $$
    \item If $1 \le |T| \le 2$, we have
      $$
      E(T) = 0 .
      $$
    \item If $|T| \ge 3$, we have
      $$
      |E(T)| \le C^{D+2} \left(\frac{\sqrt{(q-1)/\alpha}}{C}\right)^{|T|} \delta \prod_{\substack{i \not\in T\\i \ne 1}} \|g_i\|_2  \prod_{\substack{i \in T\\i \ne 1}} \|h_i\|_2 .
      $$
    \end{itemize}
  \end{lemma}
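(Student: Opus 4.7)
The plan is to prove the three cases of the lemma separately: the $T = \emptyset$ case by reduction to fewer columns, the $|T| \in \{1,2\}$ cases by pairwise independence, and the $|T| \ge 3$ case by conditioning on $X^1$ and applying the outer induction on $n$.

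For $T = \emptyset$, each $g_i$ depends only on $X^{>1} := (X^2, \ldots, X^n)$, so $E(\emptyset)$ is literally the noisy inner product of $g_1, \ldots, g_k$ on $n-1$ columns, and the inductive hypothesis applied to $g_1, \ldots, g_k$ yields the claim. We use that $g_1$ is a sub-sum of the Fourier expansion of $f_1$, so $\max_\sigma |\hat{g_1}(\sigma)| \le \delta$ and $\degtwo(g_1, \ldots, g_k) \le D$.

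For $|T| \in \{1,2\}$, I expand $h_i(X_i) = \sum_{a \ne 0, \tau} \hat{f_i}(a, \tau)\,\chi_a(X_i^1)\,\chi_\tau(X_i^{>1})$ and take the expectation over $X^1$ first. For $|T| = 1$, the inner expectation is $\E[\chi_a(X_i^1)] = \scalprod{\chi_a, \chi_0} = 0$ for $a \ne 0$. For $|T| = 2$, pairwise independence of $\mu$ lets us factor $\E[\chi_a(X_i^1)\chi_b(X_j^1)] = \E[\chi_a(X_i^1)]\,\E[\chi_b(X_j^1)] = 0$. In either case $E(T) = 0$.

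For $|T| \ge 3$, I condition on $X^1$. The inner expectation over $X^{>1}$ is then a noisy inner product on $n-1$ columns of the functions $g_i$ (for $i \notin T$) and the restrictions $H_i := h_i(X_i^1, \cdot)$ (for $i \in T$), to which the outer induction applies. Three ingredients are needed: first, $\deg(H_i) \le \deg(f_i) - 1$, since every character in $h_i$ has $\sigma_1 \ge 1$; second, whichever of $g_1$, $H_1$ plays the role of the ``first function'' has max Fourier coefficient at most $\delta$ times a factor of at most $\sqrt{(q-1)/\alpha}$, via Cauchy--Schwarz together with the identity $\sum_a |\chi_a(x)|^2 = 1/\mu_1(x) \le 1/\alpha$; third, the same identity gives $\|H_i\|_2 \le \alpha^{-1/2}\|h_i\|_2$ pointwise in $X_i^1$ for $i \in T$ with $i \ne 1$. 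The main obstacle is the $\degtwo$ bookkeeping: writing $d_i := \deg(f_i)$ and $d_i' := d_i - \mathbf{1}[i \in T]$, one has $\sum_i d_i' = \sum_i d_i - |T|$ and the sum of the two largest $d_i'$ is at least (sum of two largest $d_i$)$\,- 2$, so the new $\degtwo$ value $D'$ satisfies $D' \le D + 2 - |T|$. This is exactly the offset needed to match the factor $C^{|T|}$ in the denominator of the claimed bound; plugging everything into the inductive hypothesis and taking the outer expectation over $X^1$ completes the proof.
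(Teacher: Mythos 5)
Your proof is correct, and for the case $|T| \ge 3$ it takes a genuinely different route from the paper's. The $T = \emptyset$ and $1 \le |T| \le 2$ cases match the paper's argument (decompose $h_i$ along the first coordinate and use that $\E[\chi_a] = 0$ for $a \ne 0$ together with pairwise independence). For $|T| \ge 3$, the paper expands $h_i = \sum_{a \ne 0} \chi_{i,a}(x_1) h_{i,a}(x_{>1})$, writes $\E_{X^1}\bigl[\prod_{i \in T} h_i\bigr]$ as a sum over $\sigma \in [q-1]^T$, bounds each coefficient $\bigl|\E_{X^1}[\prod_{i\in T}\chi_{i,\sigma_i}]\bigr|$ by $\alpha^{-|T|/2}$ via H\"older and \factref{chi_inf_bound}, applies the induction hypothesis separately to each $\sigma$-term, and finally pays a $(q-1)^{|T|/2}$ factor from Cauchy--Schwarz when recombining the $\sum_{\sigma_i}\|h_{i,\sigma_i}\|_2$'s. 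You instead condition on $X^1$, observe that the inner expectation is itself a noisy inner product (on $n-1$ columns) of the $g_i$ and the restrictions $H_i = h_i(X_i^1,\cdot)$, invoke the induction hypothesis \emph{once} for the whole conditioned tuple, and only then average over $X^1$ using the uniform-in-$X^1$ bounds $\deg(H_i) \le \deg(f_i) - 1$, $\max_\tau |\widehat{H_1}(\tau)| \le \delta\sqrt{(q-1)/\alpha}$, and $\|H_i\|_2 \le \alpha^{-1/2}\|h_i\|_2$ --- all three of which follow from Cauchy--Schwarz plus $\sum_a |\chi_a(x)|^2 = 1/\mu(x)$, as you note. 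Your route is arguably cleaner (one invocation of the IH, no separate H\"older step) and actually yields a slightly sharper dependence on $q$: the $(q-1)$-factor appears only once as $\sqrt{q-1}$ from the first function, rather than as $(q-1)^{|T|/2}$, because Cauchy--Schwarz is applied to whole restricted functions rather than to individual Fourier components. Both proofs share the same skeleton (degree drops by one under restriction, pairwise independence kills $|T| \le 2$, and the budget $C^{D-|T|+2}$ offsets the geometric factor), and your $\degtwo$ bookkeeping is exactly right: the sum of all degrees drops by $|T|$ while the two largest can drop by at most $2$.
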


  Before proving the Lemma, let us see how to use it to finish the proof
  of \theoremref{corrbound_main}.

  Write $\|h_i\|_2 = \tau_i \|f_i\|_2$ for some $\tau_i \in [0,1]$, so
  that $\|g_i\|_2 = \sqrt{1-\tau_i^2} \cdot \|f_i\|_2$ (by orthogonality of
  the Fourier decomposition).  By plugging in the different cases of
  \lemmaref{fourier_inductive_step}, we can then bound
  $\noiseprod{f_1, \ldots, f_k}$ by
  \begin{eqnarray} \nonumber
    \lefteqn{|\noiseprod{f_1, \ldots, f_k}| \le \sum_T |E(T)|} \hphantom{12345}\\ \nonumber
    &\le& C^D \delta \prod_{i=2}^k \|g_i\|_2 + \sum_{|T|\ge 3} C^{D+2}\left(\frac{\sqrt{(q-1)/\alpha}}{C}\right)^{|T|} \delta \prod_{\substack{i \not \in T\\i \ne 1}} \|g_i\|_2 \prod_{\substack{i \in T\\i \ne 1}} \|h_i\|_2 \\ \nonumber
    &=& C^D \delta \prod_{i=2}^k \|f_i\|_2 \times {} \\ \label{par}
  && \hphantom{} {} \Bigg(\prod_{i=2}^k
  \sqrt{1-\tau_i^2} + \sum_{|T| \ge 3} C^2 \left(\frac{\sqrt{(q-1)/\alpha}}{C}\right)^{|T|}
  \prod_{\substack{i \not\in T\\i \ne 1}} \sqrt{1-\tau_i^2}
  \prod_{\substack{i \in T\\i \ne 1}} \tau_i \Bigg).
  \end{eqnarray}
  Hence, it suffices to bound the ``factor'' inside the large
  parenthesis in~(\ref{par}) by $1$ in order to complete the proof of
  \theoremref{corrbound_main}.

  Let $\tau = \max_{i \ge 2} \tau_i$.  Then the factor in~(\ref{par}) can be bounded by
  \begin{equation} \label{eq:err1}
  \sqrt{1-\tau^2} + \tau^2 \sum_{i=3}^{k} {k \choose i} \left(\frac{\sqrt{(q-1)/\alpha}}{C^{1/3}}\right)^i
  \end{equation}
  where in the sum the value of $i$ corresponds to the size of the set $T$ and we assumed that $C > 1$ and then used that, for
  $i \ge 3$, $C^{2-i} \le C^{-i/3}$.  To bound~(\ref{eq:err1}), we use
  the following simple lemma:
  \begin{lemma}
    For every $k \ge 3$,
    $$
    \sum_{i=3}^k {k \choose i} \frac{1}{k^i} \le 1/2.
    $$
  \end{lemma}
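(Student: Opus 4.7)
The plan is to bound each term individually and then sum. The key observation is
\[
\binom{k}{i}\frac{1}{k^i} = \frac{k(k-1)\cdots(k-i+1)}{k^i \cdot i!} \le \frac{1}{i!},
\]
since each of the $i$ factors in the numerator is at most $k$. This estimate has the pleasant feature of being independent of $k$, so the sum can be uniformly controlled.

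Applying this term-by-term and extending the summation range to infinity, I would write
\[
\sum_{i=3}^{k} \binom{k}{i}\frac{1}{k^i} \;\le\; \sum_{i=3}^{\infty} \frac{1}{i!} \;=\; e - 1 - 1 - \tfrac{1}{2} \;=\; e - \tfrac{5}{2}.
\]
Since $e < 2.72 < 3$, we get $e - 5/2 < 1/2$, which finishes the proof. The only ``step'' here is the standard Taylor series identity $\sum_{i=0}^{\infty} 1/i! = e$, together with the numerical estimate $e < 3$.

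There is no real obstacle; the proof is essentially a one-line calculation once the bound $\binom{k}{i}/k^i \le 1/i!$ is noted. A slightly sharper alternative would be to use the binomial identity $\sum_{i=0}^k \binom{k}{i}/k^i = (1+1/k)^k \le e$ and subtract the $i=0,1,2$ terms explicitly, which gives $e - 2 - (k-1)/(2k)$; this is also $\le 1/2$ for all $k \ge 3$ and would yield a tighter constant, but the simpler factorial bound is already enough for the statement as written.
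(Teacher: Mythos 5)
Your proof is correct and matches the paper's argument essentially verbatim: both bound $\binom{k}{i}/k^i \le 1/i!$ and then compare against the Taylor series of $e$ to get $e - 5/2 \le 1/2$. The only cosmetic difference is that you extend the sum to infinity while the paper keeps it finite and bounds $\sum_{i=0}^k 1/i! \le e$; these are the same estimate.
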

  \begin{proof}
    Since ${k \choose i} \le k^i/i!$ we have
    $$ \sum_{i=3}^k {k \choose i} \frac{1}{k^i} \le \sum_{i=3}^k
    \frac{1}{i!} \le e - 5/2 \le 1/2,
    $$ where the second inequality is by the Taylor expansion $e =
    \sum_{i=0}^{\infty} \frac{1}{i!} \ge \sum_{i=0}^k \frac{1}{i!}$.
  \end{proof}

  Hence, if $C \ge \left(k\sqrt{\frac{q-1}{\alpha}}\right)^3$, the
  factor in~(\ref{par}) is bounded by
  $$
  \sqrt{1-\tau^2} + \tau^2 /2 \le 1.
  $$ This concludes the proof of \theoremref{corrbound_main}.  We
  have not yet addressed the claim that if the marginals $\mu_i$ are
  uniform, there is a Fourier basis such that $C$ can be chosen as
  $(k\sqrt{q-1})^3$.  See the comment after the proof of
  \lemmaref{fourier_inductive_step}.
  \end{proof}

  We now prove the lemma used in the previous proof.

  \begin{proof}[Proof of \lemmaref{fourier_inductive_step}]
    The case $T = \emptyset$ is a direct application of the induction
    hypothesis, since the functions $g_i$ depend on at most $n-1$
    variables (and have $\deg_{-2}(g_1, \ldots, g_k) \le D$).

    For $i \in [k]$, write
    $$
    h_i(x) = \sum_{j=1}^{q-1} \chi_{i,j}(x_1) h_{i,j}(x_2, \ldots, x_n)
    $$ for a Fourier basis $\chi_{i,0}=1, \chi_{i,1}, \ldots, \chi_{i,q-1}$ of
    $L^2(\Omega_i, \mu_i)$. Denoting by $X^j$ the $j$th column of
    $X$, and writing $\E_{X^2,\ldots,X^n}$ for the average over $X^2,\ldots,X^n$ we can write $E(T)$ as
    \begin{eqnarray*}
      E(T) &=& \E_{X^2,\ldots,X^n}\left[ \prod_{i \not \in T} g_i(X_i) \E_{X^1}\left[\prod_{i \in T} h_i(X_i)\right] \right] \\
      &=& \E_{X^2,\ldots,X^n}\left[ H_T(X) \cdot \prod_{i \not \in T} g_i(X_i) \right],
    \end{eqnarray*}
    where
    \begin{eqnarray*}
      H_T(X) &=& \E_{X^1}\left[\prod_{i \in T} h_i(X_i)\right] \\
      &=& \sum_{\sigma \in [q-1]^{T}} \E_{X^1} \left[ \prod_{i \in T} \chi_{i,\sigma_i}(X^1_i) \right] \prod_{i \in T} h_{i,\sigma_i}(X_i).
    \end{eqnarray*}
    Now for $1 \le |T| \le 2$, the pairwise independence of $\mu$ gives
    that for any $\sigma \in [q-1]^T$,
    $$
    \E_{X^1} \left[ \prod_{i \in T} \chi_{i,\sigma_i}(X^1_i) \right] = \prod_{i \in T} \E[ \chi_{i,\sigma_i} ] = 0,
    $$
    hence in this case $H_T(X) = 0$ and by extension $E(T) = 0$.

    Thus, only the case $|T| \ge 3$ remains.  By Hölder's
    inequality, we can bound
    \begin{eqnarray}
      \eqnlabel{chiprod_bound}
      \E_{X^1} \left[ \prod_{i \in T} \chi_{i,\sigma_i}(X^1_i) \right] \le
    \prod_{i \in T} \|\chi_{i,\sigma_i}\|_{|T|}.
    \end{eqnarray}
    By \factref{chi_inf_bound}, $\|\chi_{i,\sigma_i}\|_{\infty}$ can be
    bounded by
    \[
    \sqrt{1/\alpha(\mu_i)} \le \sqrt{1/\min_i \alpha(\mu_i)} = \sqrt{1/\alpha}.
    \]
    Hence we can bound the above by
    $(1/\alpha)^{|T|/2}$.

    Plugging this into $E(T)$ gives
    \begin{eqnarray*}
      E(T) &\le& (1/\alpha)^{|T|/2} \E_{X^2,\ldots,X^n}\left[ \sum_{\sigma \in [q-1]^{T}} \prod_{i \in T} h_{i,\sigma_i}(X_i) \prod_{i \not \in T} g_i(X_i) \right].
    \end{eqnarray*}

    For $\sigma \in [q-1]^T$, let $D_\sigma$ be the sum of the $k-2$ smallest
    degrees of the polynomials $\{g_i: i \not\in T\} \cup \{h_{i,\sigma_i}:
    i \in T\}$.  Since $g_i$ and $h_{i,\sigma_i}$ are functions of $n-1$ variables,
    we can use the induction hypothesis to get a bound of
    \begin{eqnarray*}
      E(T) &\le& (1/\alpha)^{|T|/2} \sum_{\sigma \in [q-1]^{T}}  C^{D_\sigma} \delta \prod_{\substack{i \in T\\i \ne 1}} \|h_{i,\sigma_i}\|_2 \prod_{\substack{i \not \in T\\i \ne 1}} \|g_i\|_2.
    \end{eqnarray*}
    But since the $h_{i,\sigma_i}$'s have strictly smaller degrees than the
    corresponding $f_i$'s, $D_\sigma$ is bounded by $D-|T|+2$, and hence we
    have that
    \begin{eqnarray*}
      E(T) &\le& \alpha^{-|T|/2} C^{D-|T|+2} \sum_{\sigma \in [q-1]^{T}}  \delta \prod_{\substack{i \in T\\i \ne 1}} \|h_{i,\sigma_i}\|_2 \prod_{\substack{i \not \in T\\i \ne 1}} \|g_i\|_2\\
      &\le& C^{D+2} \left(\frac{\sqrt{(q-1)/\alpha}}{C}\right)^{|T|} \delta  \prod_{\substack{i \in T\\i \ne 1}} \|h_i\|_2 \prod_{\substack{i \not \in T\\i \ne 1}} \|g_i\|_2,
    \end{eqnarray*}
    where we used the fact that $\sum_{j \in [q-1]} \|h_{i,j}\|_2 \le
    \sqrt{q-1} \|h_i\|_2$ (by Cauchy-Schwarz and orthogonality of the
    functions $h_{i,j}$).

    To obtain the bound for $|E(T)|$, we can simply negate one of
    functions $g_i$ for $i \not\in T$ or $h_i$ for $i \in T$, so that
    $E(T)$ is negated and the calculations above produce an upper
    bound on $-E(T)$.  This concludes the proof of
    \lemmaref{fourier_inductive_step}.
  \end{proof}

  \begin{remark}
    In the case when the marginal distributions $\mu_i$ are uniform, one
    can take as basis of $(\Omega, \mu)$ the standard Fourier basis $\chi_{y}(x) =
    e^{2\pi i \frac{y \cdot x}{q}}$ (where we identify the elements $x$
    of $\Omega$ with $\Z_q$).  For this basis, $\|\chi_{j}\|_{\infty} =
    1$ and hence \eqnref{chiprod_bound} can be bounded by $1$ rather
    than $1/\sqrt{\alpha}$, which implies that for this basis, we can
    choose $C = (k\sqrt{q-1})^3$.
  \end{remark}

\subsection{Corollaries}

We proceed with some corollaries of \theoremref{corrbound_main}.  The
first says that if all non-empty Fourier coefficients of $f_1$ are
small, then the noise correlation is small.

\begin{corollary}
  \corollabel{invariance_expectation}
  Assume the setting of \theoremref{corrbound_main}, but with $\|f_i\|_2 \le 1$ for each $i$ and
  \[
  \delta := \max_{1 \le i \le k-2} \max_{\sigma \ne \vect{0}} |\hat{f}_i(\sigma)|.
%, \quad  \max_{1 \leq i \leq k} \|f_i\|_2 = 1.
  \]
%  being the largest \emph{non-empty} Fourier
%  coefficient in any of the functions.  Assume also that $\|f_i\|_2 \le F$ for all $i$.
Then,
  \begin{equation} \label{eq:main2}
   \left| \noiseprod{f_1, \ldots, f_k} - \prod_{i=1}^k \E[f_i]\right|
  \le \delta (k-2) C^D,
  \end{equation}
  where $C$ and $D$ are as in \theoremref{corrbound_main}.
\end{corollary}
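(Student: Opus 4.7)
The plan is a telescoping (hybrid) argument: replace $f_1,\ldots,f_{k-2}$ one at a time by their expectations, bounding each replacement error by a single application of \theoremref{corrbound_main}.

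For $0\le j\le k-2$, define
\[
E_j \;=\; \E[f_1]\cdots\E[f_j]\cdot\E\!\left[\prod_{i=j+1}^k f_i(X_i)\right].
\]
Then $E_0 = \noiseprod{f_1,\ldots,f_k}$. In $E_{k-2}$ only the factor $\E[f_{k-1}(X_{k-1})f_k(X_k)]$ remains, and pairwise independence of $\mu$ (which implies the vectors $X_{k-1}$ and $X_k$ are independent) lets us factor it as $\E[f_{k-1}]\E[f_k]$, so $E_{k-2}=\prod_{i=1}^k \E[f_i]$. The triangle inequality then gives
\[
\Bigl|\noiseprod{f_1,\ldots,f_k} - \prod_{i=1}^k \E[f_i]\Bigr| \;\le\; \sum_{j=0}^{k-3} |E_j - E_{j+1}|.
\]

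To bound one summand, set $g_{j+1} = f_{j+1}-\E[f_{j+1}]$; then
\[
E_j-E_{j+1} \;=\; \E[f_1]\cdots\E[f_j]\cdot\noiseprod{g_{j+1},f_{j+2},\ldots,f_k},
\]
where this inner product is taken with respect to the marginal of $\mu$ on rows $\{j+1,\ldots,k\}$, which is again pairwise independent. Since $j+1\le k-2$, the hypothesis on $\delta$ gives $\max_\sigma|\widehat{g_{j+1}}(\sigma)| \le \delta$ (the $\sigma=\vect{0}$ coefficient of $g_{j+1}$ vanishes by construction), while $\|g_{j+1}\|_2\le\|f_{j+1}\|_2\le 1$ and $\deg(g_{j+1})\le\deg(f_{j+1})$. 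Applying \theoremref{corrbound_main} with $g_{j+1}$ in the role of the first function yields $|\noiseprod{g_{j+1},f_{j+2},\ldots,f_k}|\le C^{D_{j+1}}\delta$, where $D_{j+1}=\degtwo(g_{j+1},f_{j+2},\ldots,f_k)$. Combined with $|\E[f_i]|\le\|f_i\|_2\le 1$, we obtain $|E_j-E_{j+1}|\le C^{D_{j+1}}\delta$.

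The proof is finished by the order-statistics observation $D_{j+1}\le D$: if $d_1\le\cdots\le d_k$ denote the sorted degrees of $f_1,\ldots,f_k$ and $e_1\le\cdots\le e_{k-j}$ those of $g_{j+1},f_{j+2},\ldots,f_k$, then $e_i\le d_{i+j}$, so $D_{j+1}=\sum_{i=1}^{k-j-2}e_i \le \sum_{i=j+1}^{k-2}d_i\le D$ (using non-negativity of degrees). Since $C\ge 1$, this gives $C^{D_{j+1}}\le C^D$, and summing the bound $C^D\delta$ over the $k-2$ telescoping terms yields $(k-2)C^D\delta$, as claimed. There is no serious obstacle: every step is either an invocation of pairwise independence, a direct application of the main theorem, or an elementary norm bound, with the only mildly subtle piece being the order-statistics comparison.
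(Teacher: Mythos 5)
Your proposal is correct and is essentially the same argument as the paper's: the paper proves the corollary by induction on $k$, peeling off $f_1$ by writing $f_1 = \E[f_1] + g_1$ and applying \theoremref{corrbound_main} to $\noiseprod{g_1,f_2,\ldots,f_k}$; unrolled, that induction is precisely your telescoping hybrid $E_0,\ldots,E_{k-2}$. The one place you are more explicit than the paper is the order-statistics verification that $\degtwo$ of each truncated collection is at most $D$ (the paper silently uses $C^{D_{j+1}} \le C^D$), which is a welcome bit of care but not a different proof.
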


\begin{proof}
  We prove the claim by induction on $k$. The case $k=2$ is trivial.
  For the induction hypothesis let $g_1(x) = f_1(x) - \E[f_1]$.  Then by \theoremref{corrbound_main}
  $$
  \left|\noiseprod{f_1, \ldots, f_k} - \E[f_1] \noiseprod{f_2, \ldots, f_k} \right| = | \noiseprod{g_1,f_2,\ldots,f_k} |
  \leq \delta C^D
  $$
  and by the induction hypothesis
  $$
  \left|\E[f_1] \noiseprod{f_2, \ldots, f_k} - \prod_{i=1}^k \E[f_i] \right| =
  |\E[f_1]| \cdot \left|\noiseprod{f_2, \ldots, f_k} - \prod_{i=2}^k \E[f_i] \right| \leq (k-3) \delta C^D.
  $$
  The proof follows.
\end{proof}

A more careful examination of the proof above reveals that in the case
where the noise correlation is large there should be a basis element
with large weight in one of the functions that is correlated with some
other functions. Specifically:

\begin{corollary}
  \corollabel{invariance_expectation2} Assume the setting of
  \theoremref{corrbound_main} but with $D = \sum \deg(f_i)$ the sum of
  the degrees of all the functions, and $\| f_i \|_2 \le 1$ for each $f_i$.

Then for all $\delta > 0$ if:
  \begin{equation} \eqnlabel{invariance_expectation2}
   \left| \noiseprod{f_1, \ldots, f_k} - \prod_{i=1}^k \E[f_i]\right|
  > 2 \delta (k-2) C^D,
  \end{equation}
  then there exists an $1 \leq i \leq k-2$ and a non-empty multi-index $\sigma$ such that
  \[
  |\hat{f}_i(\sigma)| > \delta, \quad | \E[\chi^i_{\sigma}  \cdot f_{i+1} \cdots f_k ] | > \delta^2 C^D
  \]
  where $C$ is the constant from \theoremref{corrbound_main}.
\end{corollary}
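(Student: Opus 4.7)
The plan is induction on $k$, reusing the decomposition from the proof of \corolref{invariance_expectation} and then looking one level deeper into the Fourier expansion of $f_1$ to isolate a single large Fourier coefficient that carries the correlation, in the style of the proof of \theoremref{corrbound_main}.

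For the base case $k=3$, pairwise independence makes $X_2,X_3$ independent, so $\noiseprod{1,f_2,f_3}=\E[f_2]\E[f_3]$ and hence $\noiseprod{f_1,f_2,f_3}-\prod_i\E[f_i]=\noiseprod{g_1,f_2,f_3}$ with $g_1:=f_1-\E[f_1]$. Partition the non-trivial spectrum of $f_1$ as $A=\{\sigma\neq \vect{0}:|\hat{f}_1(\sigma)|>\delta\}$ and its complement, and correspondingly write $g_1=L_1+S_1$ with $L_1=\sum_{\sigma\in A}\hat{f}_1(\sigma)\chi_\sigma^1$. Then $\max_\sigma|\widehat{S_1}(\sigma)|\le \delta$ and $\|S_1\|_2\le 1$, so \theoremref{corrbound_main} gives $|\noiseprod{S_1,f_2,f_3}|\le \delta C^D$, and the hypothesis forces $|\noiseprod{L_1,f_2,f_3}|>\delta C^D$. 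Parseval now yields $|A|\le \delta^{-2}$, and Cauchy--Schwarz gives $\sum_{\sigma\in A}|\hat{f}_1(\sigma)|\le \sqrt{|A|}\le 1/\delta$. Expanding
$$\noiseprod{L_1,f_2,f_3}=\sum_{\sigma\in A}\hat{f}_1(\sigma)\,\noiseprod{\chi_\sigma^1,f_2,f_3},$$
if every $|\noiseprod{\chi_\sigma^1,f_2,f_3}|$ with $\sigma\in A$ were at most $\delta^2 C^D$, then the left side would be at most $(1/\delta)\cdot\delta^2 C^D=\delta C^D$, a contradiction. Hence some $\sigma^\star\in A$ witnesses both required inequalities with $i=1$.

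For the inductive step $k\ge 4$, I use the same identity from the proof of \corolref{invariance_expectation}:
$$\noiseprod{f_1,\ldots,f_k}-\prod_{i=1}^k\E[f_i]=\noiseprod{g_1,f_2,\ldots,f_k}+\E[f_1]\Bigl(\noiseprod{1,f_2,\ldots,f_k}-\prod_{i=2}^k\E[f_i]\Bigr).$$
Since $|\E[f_1]|\le 1$, the hypothesis forces either $|\noiseprod{g_1,f_2,\ldots,f_k}|>2\delta C^D$, or $|\noiseprod{1,f_2,\ldots,f_k}-\prod_{i=2}^k\E[f_i]|>2\delta(k-3)C^D$. In the first case, the Fourier partitioning argument from the base case goes through verbatim (with $f_2,f_3$ replaced by $f_2,\ldots,f_k$; the bound $|\noiseprod{S_1,f_2,\ldots,f_k}|\le \delta C^D$ still follows from \theoremref{corrbound_main} since $\|f_i\|_2\le 1$), yielding the conclusion with $i=1$. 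In the second case, applying the induction hypothesis to $f_2,\ldots,f_k$ under the pairwise independent marginal of $\mu$ on coordinates $\{2,\ldots,k\}$ yields the conclusion for some $i'\in\{1,\ldots,k-3\}$, giving $i=i'+1\in\{2,\ldots,k-2\}$.

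The main conceptual step is the Fourier partitioning argument: largeness of $|\noiseprod{L_1,f_2,\ldots,f_k}|$ does not automatically single out one dominant $\sigma$, but the Parseval bound $|A|\le \delta^{-2}$ combined with Cauchy--Schwarz gives the factor $1/\delta$ needed to close the pigeonhole and produce a single $\sigma^\star$ with both $|\hat{f}_1(\sigma^\star)|>\delta$ and a large noise correlation. A minor bookkeeping item is that the degree sum $D'=\sum_{i\ge 2}\deg f_i$ arising in the inductive call is at most $D$, so bounds obtained for the subproblem come out in terms of $C^{D'}\le C^D$ (using $C\ge 1$) and remain compatible with the stated bound.
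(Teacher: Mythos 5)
Your overall strategy coincides with the paper's: first isolate a single index $i$ for which $|\noiseprod{g_i, f_{i+1}, \ldots, f_k}| > 2\delta C^D$ (you do this by explicit induction on $k$; the paper does it by unrolling the telescoping decomposition $\noiseprod{f_1,\ldots,f_k}-\prod\E[f_i]=\sum_{i=1}^{k-2}\bigl(\prod_{j<i}\E[f_j]\bigr)\noiseprod{g_i,f_{i+1},\ldots,f_k}$ from the proof of \corolref{invariance_expectation}, which is the same computation), then split $g_i$ into its large-coefficient part $L_i$ and small-coefficient part $S_i$, bound the $S_i$-contribution by \theoremref{corrbound_main}, and pigeonhole over the at most $\delta^{-2}$ coefficients of $L_i$. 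Your Parseval-plus-Cauchy--Schwarz pigeonhole ($|A|\le\delta^{-2}$, $\sum_{\sigma\in A}|\hat f_1(\sigma)|\le 1/\delta$) is a slightly more explicit form of the paper's weighted averaging over $\sum_{\sigma\in A}|\hat g_i(\sigma)|^2 \le 1$, but it is the same idea and yields the same $\delta^2 C^D$ threshold.

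There is, however, a genuine gap in the second case of your inductive step, and your closing ``bookkeeping'' remark gets the inequality direction backward. You invoke the induction hypothesis for $f_2,\ldots,f_k$, where the degree parameter becomes $D'=\sum_{i\ge 2}\deg f_i$ and, strictly speaking, the constant from \theoremref{corrbound_main} becomes $C_{k-1}\le C$. The conclusion the IH supplies is then $|\E[\chi^i_\sigma f_{i+1}\cdots f_k]| > \delta^2 C_{k-1}^{D'}$, and since $C_{k-1}^{D'}\le C^{D}$, this is a \emph{weaker} lower bound than the required $>\delta^2 C^D$, not a compatible one: $C^{D'}\le C^D$ is exactly the wrong direction here. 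The argument is salvageable precisely because your available hypothesis $|\noiseprod{f_2,\ldots,f_k}-\prod_{i\ge 2}\E[f_i]| > 2\delta(k-3)C^D$ is \emph{stronger} than what the IH demands for the parameter $\delta$: invoke the IH with the inflated parameter $\delta' := \delta\, C^D/C_{k-1}^{D'}\ge\delta$, for which the hypothesis $>2\delta'(k-3)C_{k-1}^{D'}$ still holds, and the conclusion becomes $|\hat f_i(\sigma)|>\delta'\ge\delta$ together with $|\E[\chi^i_\sigma f_{i+1}\cdots f_k]|>(\delta')^2C_{k-1}^{D'}=\delta^2 (C^D)^2/C_{k-1}^{D'}\ge\delta^2 C^D$. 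Alternatively, and more cleanly, avoid re-invoking the corollary for a smaller $k$ altogether: perform the telescoping once in the original problem (as the paper does), so that every term is controlled with the original $C$ and $D$, and pigeonhole to extract $i$ directly.
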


\begin{proof}
  From the proof of \corolref{invariance_expectation} it follows that if
  \eqnref{invariance_expectation2} holds then there exists an $1 \le i
  \le k-2$ such that
  \[
  |\noiseprod{g_i, f_{i+1}, \ldots, f_{k}}| > 2 \delta C^D,
  \]
  where $g_i = f_i - \E[f_i]$.
  Write $g_i = \sum_{\sigma \in A} \hat{g}_i(\sigma) \chi^i_{\sigma} + h_i$ where $A$ is the set of all $\sigma$
  for which $|\hat{g}_i(\sigma)| > \delta$. Then by \theoremref{corrbound_main} it follows that:
  \[
  |\E[h_i f_{i+1} \cdots f_k]| < \delta C^D,
  \]
  which implies
  \[
  \left|\E\left[\left(\sum_{\sigma \in A} \hat{g}_i(\sigma) \chi^i_{\sigma}\right) f_{i+1} \cdots f_k\right]\right| > \delta C^D.
  \]
  Writing
  \[
  t(\sigma) = \E\left[\chi^i_{\sigma} f_{i+1} \cdots f_k\right],
  \]
  for $\sigma \in A$, we see that $\sum_{\sigma \in A} |\hat{g}_i(\sigma) t(\sigma)| > \delta C^D$.
  Since $\sum_{\sigma \in A}| \hat{g}_i(\sigma)|^2 \leq 1$ it follows that
  \[
  \sum_{\sigma \in A} |\hat{g}_i(\sigma) t(\sigma)| > \delta C^D \sum_{\sigma \in A}| \hat{g}_i(\sigma)|^2,
  \]
  which implies that there exists a $\sigma$ with
  \begin{equation} \eqnlabel{del3}
  |\E\left[\chi^i_{\sigma} f_{i+1} \cdots f_k\right]| = |t(\sigma)| > \delta C^D |\hat{g}_i(\sigma)| \geq \delta^2 C^D.
  \end{equation}
  The proof follows.
\end{proof}

Next we apply the previous corollary to \eqnref{del3} and the
functions $f_{i+1},\ldots,f_k,\chi^i_{\sigma}$ to obtain that
$|\E[f_{j+1} \cdots f_k \chi^i_{\sigma} \chi^j_{\sigma'}]|$ is large
for some $j > i$ and $\sigma'$. Continuing in this manner we obtain
the following:

\begin{corollary}
  \corollabel{invariance_expectation3}
  Assume the setting of
  \theoremref{corrbound_main} but with $D = \sum \deg(f_i)$ the sum of
  the degrees of all the functions, and $\| f_i \|_2 \le 1$ for each $f_i$.

Then for all $\delta > 0$ if:
  \begin{equation} \label{eq:main3}
   \left| \noiseprod{f_1, \ldots, f_k} - \prod_{i=1}^k \E[f_i]\right| > C^D \delta,
  \end{equation}
  then there exists a set $I \subseteq [k]$ with $|I| \geq 3$ and for all $i \in I$ a non-zero multi-index
  $\sigma(i)$ such that:
  \begin{itemize}
  \item
  For all $i \in I$:
  \[
  |\hat{f}_i(\sigma)| > \left(\frac{\delta}{2k}\right)^{2^k}
  \]

  \item
  For all $a \in \cup_{i \in I} S(\sigma(i))$ it holds that
  \[
  |\{ i : a \in S(\sigma(i)) \}| \geq 3
  \]
  (the $3$ above may be replaced by $r+1$ if the distributions involved are $r$-wise independent).
  \end{itemize}
\end{corollary}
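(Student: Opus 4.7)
The plan is to iterate \corolref{invariance_expectation2} in the manner hinted at just before the statement, building up a growing list of extracted Fourier characters. Starting from the hypothesis $|\noiseprod{f_1,\ldots,f_k} - \prod\E[f_i]| > C^D \delta$, a single application of \corolref{invariance_expectation2} yields $(i_1,\sigma_1)$ with $|\hat f_{i_1}(\sigma_1)|$ large and $|\E[\chi^{i_1}_{\sigma_1} f_{i_1+1}\cdots f_k]|$ still large. At stage $r$ I apply \corolref{invariance_expectation2} to the $k' = (k - i_{r-1}) + (r-1)$ functions obtained by concatenating the remaining $f_j$'s ($j > i_{r-1}$) with the previously extracted basis functions $\chi^{i_1}_{\sigma_1}, \ldots, \chi^{i_{r-1}}_{\sigma_{r-1}}$, placed, crucially, at the very end of the list.

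The ordering matters because \corolref{invariance_expectation2} only extracts an index $i \le k'-2$. By putting the old $\chi$'s at the end, the range $[k'-2]$ of extractable positions is forced to contain at most $r-3$ of the $\chi$-positions, so the last two $\chi$'s are always off-limits. This gives a clean dichotomy: either (a) the extracted position points to a new $f$, producing $i_r > i_{r-1}$ and advancing the iteration; or (b) it points to some previously extracted $\chi^{i_m}_{\sigma_m}$, in which case the tail $\chi^{i_m}_{\sigma_m}\chi^{i_{m+1}}_{\sigma_{m+1}}\cdots \chi^{i_{r-1}}_{\sigma_{r-1}}$ consists of at least $r-m \ge 3$ basis functions whose product has expectation bounded below, and the iteration terminates. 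Since the $i_r$'s are strictly increasing and bounded by $k$, if case (b) never occurs the iteration must terminate by stage $k$, at which point all $f$'s have been consumed and we are left with $|\E[\prod_{m=1}^{k} \chi^{i_m}_{\sigma_m}]| > 0$.

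In either termination mode we obtain an index set $I$ with $|I| \ge 3$ and $|\E[\prod_{i\in I} \chi^i_{\sigma(i)}]| > 0$. Using that the columns of the random matrix are i.i.d.\ from $\mu$, this expectation factors as a product over columns $a\in[n]$, with the $a$-factor equal to $\E[\prod_{i \in T_a} \chi^i_{(\sigma(i))_a}(X^a_i)]$, where $T_a = \{i\in I : a \in S(\sigma(i))\}$. For $|T_a|=1$ the factor vanishes by orthonormality ($\chi_0 = \mathbf 1$ so non-trivial basis elements have mean zero); for $|T_a|=2$ pairwise independence splits the expectation into a product of two such zero means; hence nonzeroness forces $|T_a| \in \{0\} \cup \{3, 4, \ldots\}$, which is precisely the second bullet of the conclusion. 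The same argument with $r$-wise independence replaces $3$ by $r+1$.

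It remains to track the bookkeeping of the extracted thresholds. Setting $\epsilon_0 = C^D \delta$ and, at stage $r$, choosing $\delta_r = \epsilon_{r-1}/(2(k'-2)C^D) \ge \epsilon_{r-1}/(2kC^D)$ so that \corolref{invariance_expectation2} applies, one reads off $\epsilon_r \ge \delta_r^2 C^D \ge \epsilon_{r-1}^2/(4k^2 C^D)$. Solving this recurrence gives $\delta_r \ge 2k(\delta/(4k^2))^{2^{r-1}}$, which a short comparison with $(\delta/(2k))^{2^k} = (\delta^2/(4k^2))^{2^{k-1}}$ shows is strictly larger for every $r \le k$, since $\delta \le 1$ makes $\delta/(4k^2)$ a larger base than $\delta^2/(4k^2)$. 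Consequently $|\hat f_{i_m}(\sigma_m)| > \delta_m \ge (\delta/(2k))^{2^k}$ for every extracted pair, as required. I expect the main obstacle to be precisely the loop-avoidance issue handled by the ordering trick in the second paragraph; once that is in place the rest reduces to the recurrence bookkeeping and the independence/orthonormality calculation for the terminal product of $\chi$'s.
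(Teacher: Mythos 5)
Your proof is correct and follows the same route as the paper's: repeatedly apply \corolref{invariance_expectation2} to peel off basis elements, track the Fourier-coefficient threshold via the recurrence $\delta_r \gtrsim \delta_{r-1}^2/(2k)$, and finish with the pairwise-independence/orthonormality factorization over columns for the terminal product of $\chi$'s. You are in fact a bit more careful than the paper's write-up on one point: the paper simply asserts that $J'$ shrinks by one each step, leaving unaddressed what happens if the extracted index lands on a previously produced $\chi$, whereas you observe that $i \le k'-2$ leaves the last two $\chi$'s out of reach (so the surviving tail always has at least three characters) and treat that termination mode explicitly; this is the same ordering trick the paper implicitly relies on, made precise.
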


\begin{proof}

  Define $\delta_0 = \delta^{1/2}$, and $\delta_{i} =
  \frac{\delta_{i-1}^2}{2k}$.  We show by induction on $r$ that it is
  possible to find $I,J \subseteq [k]$ disjoint where $I$ is of size at
  least $r$ and for all $i \in I$ there exists a non-zero multi-index
  $\sigma(i)$ such that for all $i \in I$:
  \begin{equation} \label{eq:dela}
  |\hat{f}_i(\sigma(i))| > \delta_{r} =
  \frac{\delta^{2^{r-1}}}{(2k)^{2^r-1}} >
  \left(\frac{\delta}{2k}\right)^{2^r}
  \end{equation}
  and further
  \begin{equation} \label{eq:inda}
  \E\left[\prod_{i \in I} \chi^i_{\sigma(i)} \prod_{j \in J} f_j\right]  > C^D \delta_{r+1}.
  \end{equation}
  The base case $r=1$ is established by the previous claim. The
  induction step is proved by noting that if $J$ is non-empty and $j
  \in J$, then we may apply the previous claim to the sequence of
  functions $\{f_j\}_{j \in J}$ followed by the functions
  $\chi^{i}(\sigma(i))$. We then obtain~(\ref{eq:dela})
  and~(\ref{eq:inda}) with $\delta_{r+1}$ and sets $I'$ and $J'$ where
  $J'$ is of size one smaller than $J$.  When we stop with $J =
  \emptyset$ and $r \leq k$ we obtain that $J$ is empty and therefore:
  \[
  \E\left[\prod_{i \in I} \chi^i_{\sigma(i)}\right]  > C^D \delta_{k+1} > 0.
  \]
  This together with pairwise independence implies that
  for all $a \in \cup_{i \in I} S(\sigma(i))$ it holds that
  \[
  |\{ i : a \in S(\sigma(i)) \}| \geq 3
  \]
  as needed.
\end{proof}

We finally note while all of the results above are stated for low-degree polynomials, they also apply for polynomials that are almost low-degree. Indeed H\"{o}lder's inequality implies the following.

\begin{proposition}
  \proplabel{holder}
  Assume the setting of \theoremref{corrbound_main} and with $k$ functions satisfying $\| f_i \|_k \leq 1$ and
  $\| f_i^{>d} \|_k \leq \eps$
  for all $i$. Then
  \[
  \left|\noiseprod{f_1, \ldots, f_k} - \noiseprod{f_1^{\leq d},\ldots,f_k^{\leq d}}\right| \leq k \eps (1+\eps)^{k-1}.
  \]
\end{proposition}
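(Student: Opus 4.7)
The plan is to telescope the difference and apply H\"older's inequality term by term. Writing $f_i = f_i^{\le d} + f_i^{> d}$, one can express
\[
\prod_{i=1}^k f_i(X_i) - \prod_{i=1}^k f_i^{\le d}(X_i) = \sum_{j=1}^k \left(\prod_{i < j} f_i^{\le d}(X_i)\right) f_j^{> d}(X_j) \left(\prod_{i > j} f_i(X_i)\right),
\]
since consecutive summands telescope (replacing $f_j$ by $f_j^{\le d}$ one coordinate at a time). Taking expectations and applying the triangle inequality yields $k$ terms of the form
\[
\left|\E\left[\prod_{i < j} f_i^{\le d}(X_i) \cdot f_j^{> d}(X_j) \cdot \prod_{i > j} f_i(X_i)\right]\right|.
\]

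Next, I would apply H\"older's inequality with $k$ exponents each equal to $k$ (this is valid regardless of the joint distribution of the $X_i$'s, since each factor depends only on one of them, so its $\ell_k$ norm is the same whether taken w.r.t.\ the joint distribution or the marginal). This gives each summand a bound of
\[
\prod_{i < j} \|f_i^{\le d}\|_k \cdot \|f_j^{> d}\|_k \cdot \prod_{i > j} \|f_i\|_k.
\]
The hypothesis gives $\|f_i\|_k \le 1$ and $\|f_j^{> d}\|_k \le \eps$, while the triangle inequality for $\ell_k$ yields $\|f_i^{\le d}\|_k \le \|f_i\|_k + \|f_i^{> d}\|_k \le 1 + \eps$. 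Hence the $j$th summand is bounded by $\eps (1+\eps)^{j-1}$.

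Summing over $j = 1, \ldots, k$ gives $\sum_{j=1}^k \eps (1+\eps)^{j-1} \le k \eps (1+\eps)^{k-1}$, which is the claimed bound. There is no real obstacle here; the argument is essentially a textbook application of a telescoping decomposition combined with H\"older. The only point worth double-checking is the appeal to H\"older across different arguments, which is justified because $\E[\,\prod_i Y_i\,] \le \prod_i \|Y_i\|_k$ holds for any joint distribution of the random variables $Y_i = f_i(X_i) \text{ or } f_i^{\le d}(X_i) \text{ or } f_i^{> d}(X_i)$.
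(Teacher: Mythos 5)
Your proof is correct and follows essentially the same route as the paper's: telescope the difference by replacing $f_i$ with $f_i^{\le d}$ one index at a time, bound each of the $k$ resulting terms via H\"older with exponents all equal to $k$, and use $\|f_i^{\le d}\|_k \le \|f_i\|_k + \|f_i^{>d}\|_k \le 1+\eps$ to arrive at the bound $\sum_{j=1}^k \eps(1+\eps)^{j-1} \le k\eps(1+\eps)^{k-1}$. The paper's proof is stated more tersely but is the same argument; your explicit telescoping identity and the remark that each factor depends on only one $X_i$ (so the marginal-vs-joint issue is moot for H\"older) simply make the reasoning more transparent.
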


\begin{proof}
The proof follows by using H\"{o}lder's inequality $k$ times, each
time replacing $f_i$ with $f_i^{\leq d}$.  Note that $\| f_i^{\le d}
\|_k \leq \| f_i \|_k + \| f_i^{> d} \|_k \leq 1+\eps$, so that when
making the $i$'th replacement, the error incurred is bounded by
$$\left(\prod_{j=1}^{i-1} \|f_j^{\le d}\|_k \right) \|f_i^{>d}\|_k \left(\prod_{j=i+1}^k
\|f_j\|_k\right) \le (1+\epsilon)^{i-1} \epsilon.$$
\end{proof}

\section{Applications}
\sectlabel{app}

The first application is a ``weak inverse theorem'' for the Gowers norm. From \theoremref{corrbound_main} and the fact that
\[
\|f\|_{U^2} = \left(\sum_{\sigma} |\hat{f}^4(\sigma)|\right)^{1/4}
\]
we immediately obtain that

\begin{proposition} \label{prop:gowers}
Let $f: \Z_p^n \rightarrow \C$ have Fourier degree $d$, have $\| f \|_2 = 1$ and let $k \geq 2$.
If the $k$'th Gowers norm of $f$ satisfies $\|f\|_{U^k} >
\epsilon$, then there exists a multi-index $\sigma \in \Z_p^n$ such
that
$$
|\hat{f}(\sigma)| \geq \left( \frac{\eps}{(2^k\sqrt{q-1})^{3d}} \right)^{2^k},
%\frac{\epsilon^{2^k}}{(2^k\sqrt{q-1})^{3d}},
$$
where the Fourier coefficient is w.r.t.\ the standard Fourier basis.
In particular,
\[
\|f\|_{U^2} \geq \left( \frac{\eps}{(2^k\sqrt{q-1})^{3d}} \right)^{2^k}.
\]
\end{proposition}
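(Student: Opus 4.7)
The plan is to apply \theoremref{corrbound_main} directly to the representation of the Gowers norm as a noisy inner product exhibited in \sectref{noisecorrelation}. Recall that
$$\|f\|_{U^k}^{2^k} = \noiseprod{g_\emptyset, g_{\{1\}}, \ldots, g_{[k]}},$$
where the $2^k$ functions $g_S(X) = \mathcal{C}^{|S|+1} f(X)$ are each either $f$ or its complex conjugate $\bar f$, and the $2^k$-tuple $(X_S)_{S \subseteq [k]}$ with $X_S = X + \sum_{i \notin S} Y_i$ is pairwise (in fact $3$-wise) independent, with uniform marginals on $\Z_p^n$.

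First I would verify the hypotheses of \theoremref{corrbound_main} applied to a tuple of $2^k$ functions. Each $g_S$ has $\|g_S\|_2 = \|f\|_2 = 1$, and Fourier degree at most $d$, since complex conjugation sends $\hat f(\sigma)$ to $\overline{\hat f(-\sigma)}$ and hence preserves both the degree and the magnitudes of all Fourier coefficients. Because the marginals are uniform on $\Z_p^n$, I would use the standard Fourier basis $\chi_y(x) = \exp(2\pi i\, y \cdot x / p)$, so that the constant in the theorem can be chosen as $C = (2^k\sqrt{q-1})^3$. The degree parameter satisfies $D = \degtwo(g_\emptyset, \ldots, g_{[k]}) \le (2^k-2)\,d \le 2^k d$. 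Taking $g_\emptyset$ as the distinguished function and writing $\delta = \max_\sigma |\hat{f}(\sigma)|$, the theorem yields
$$\epsilon^{2^k} < \|f\|_{U^k}^{2^k} = \bigl|\noiseprod{g_\emptyset, \ldots, g_{[k]}}\bigr| \le C^D \delta \le (2^k\sqrt{q-1})^{3 \cdot 2^k d}\, \delta,$$
and rearranging gives the claimed lower bound on $\delta$.

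The final inequality for $\|f\|_{U^2}$ then follows immediately from the identity $\|f\|_{U^2}^4 = \sum_\sigma |\hat f(\sigma)|^4$, which implies $\|f\|_{U^2} \ge \max_\sigma |\hat f(\sigma)| = \delta$. There is no real conceptual obstacle here, as the proposition is essentially a restatement of \theoremref{corrbound_main} in the special case of the Gowers inner product. The only things to keep track of are the notational clash between the $k$ in $U^k$ and the $k$ in the theorem (which becomes $2^k$), the degree bookkeeping $D \le 2^k d$, and the observation that conjugation does not affect any of the relevant Fourier-analytic quantities.
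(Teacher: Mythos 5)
Your proof is correct and is exactly the argument the paper has in mind (the paper states this proposition follows ``immediately'' from Theorem~\ref{theorem:corrbound_main} and the identity $\|f\|_{U^2}^4=\sum_\sigma|\hat f(\sigma)|^4$, without writing out the details). You correctly identify the noisy inner product decomposition of $\|f\|_{U^k}^{2^k}$ with $2^k$ functions under a pairwise independent distribution with uniform marginals, correctly note that conjugation preserves degree and Fourier coefficient magnitudes, and correctly do the bookkeeping $D\le(2^k-2)d\le 2^kd$ to recover the stated exponent.
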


This implies that for functions of low Fourier degree, all $U^k$ norms
for constant $k \geq 2$ are equivalent.  We next obtain a similar
result for arithmetic progressions using \theoremref{corrbound_main}
and \corolref{invariance_expectation3}:

\begin{proposition} \label{prop:arith}
Let $(X_1,\ldots,X_k)$ have the uniform distribution over arithmetic progressions of length
 $k$ in $\Z_p^n$, where $3 \le k \le p$. Let $Y_1,\ldots,Y_k$ be i.i.d. and uniformly distributed in
 $\Z_p^n$. Let $f_1,\ldots,f_k: \Z_p^n \rightarrow \C$ have Fourier degree $d$ and $\| f_i \|_2 \leq 1$
 for all $i$.
Then, if
\[
|\E[f_1(X_1) \cdots f_k(X_k)] - \E[f_1(Y_1) \cdots f_k(Y_k)]| > \eps,
\]
it holds w.r.t.\ the standard Fourier basis that:
\begin{enumerate}
\item
None of the functions $f_i$ are $\delta$-uniform with
\[
\delta = \frac{\eps}{(k\sqrt{q-1})^{3dk}}.
\]
\item
There exist indices $1 \leq i(1) < i(2) < i(3) \leq k$ and multi-indices \
\[
\sigma(1), \sigma(2), \sigma(3) \in \Z_p^n, \quad \sigma(1) \cap \sigma(2) \cap \sigma(3) \neq \emptyset,
\]
such that
$$
|\widehat{f_{i(j)}}(\sigma(j))| \ge \left( \frac{\eps}{k \cdot (k \sqrt{q-1})^{3dk}} \right)^{2^k}
$$
for $1 \leq j \leq 3$.
\end{enumerate}
\end{proposition}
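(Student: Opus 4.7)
The plan is to recognize the arithmetic progression distribution as an instance of the pairwise independent framework behind \theoremref{corrbound_main} and then invoke its corollaries directly. First I would observe that if $X, Y \in \Z_p^n$ are independent uniform, then the sequence $X_i = iX + Y$ ($1 \le i \le k$) has uniform marginals on $\Z_p^n$, and for $i \ne j$ the linear map $(X, Y) \mapsto (iX+Y, jX+Y)$ has determinant $j - i \not\equiv 0 \pmod{p}$ (using $k \le p$); hence $(X_i, X_j)$ is uniform on $(\Z_p^n)^2$ and the joint distribution $\mu$ is pairwise independent with balanced marginals. Since $Y_1, \ldots, Y_k$ are i.i.d., $\E[f_1(Y_1) \cdots f_k(Y_k)] = \prod_{i=1}^k \E[f_i]$, so the hypothesis rewrites as $|\noiseprod{f_1, \ldots, f_k} - \prod_{i=1}^k \E[f_i]| > \eps$, and the remark following \theoremref{corrbound_main} lets me work in the standard Fourier basis with $C = (k\sqrt{q-1})^3$.

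For Part 1, I would argue the contrapositive via \corolref{invariance_expectation}. If every $f_i$ were $\delta$-uniform for $\delta = \eps/(k\sqrt{q-1})^{3dk}$, that corollary with $D = \degtwo(f_1, \ldots, f_k) \le (k-2)d$ would yield
\[
\left|\noiseprod{f_1, \ldots, f_k} - \prod_{i=1}^k \E[f_i]\right| \le \delta(k-2) C^D \le \frac{(k-2)\,\eps}{(k\sqrt{q-1})^{6d}} \le \eps ,
\]
using $(k\sqrt{q-1})^{6d} \ge k^6 \ge k - 2$ for $d \ge 1$; the $d = 0$ case is vacuous, since then the $f_i$ are constants and the left-hand side vanishes. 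This contradicts the assumed strict lower bound.

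For Part 2, I would invoke \corolref{invariance_expectation3} with $D = \sum_i \deg(f_i) \le dk$ and $\delta' = \eps/(k\sqrt{q-1})^{3dk}$, so that $C^D \delta' \le \eps$ and the corollary's hypothesis $|\noiseprod{f_1,\ldots,f_k} - \prod_i \E[f_i]| > C^D \delta'$ is met. It returns a set $I \subseteq [k]$ with $|I| \ge 3$ and nonzero multi-indices $\{\sigma(i)\}_{i \in I}$ satisfying $|\hat{f}_i(\sigma(i))| > (\delta'/(2k))^{2^k}$, together with the overlap property that every coordinate in $\bigcup_{i \in I} S(\sigma(i))$ lies in at least three of the supports $S(\sigma(i))$. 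Since each $\sigma(i)$ is nonzero the union is nonempty, and any coordinate $a$ in it produces indices $i(1) < i(2) < i(3) \in I$ with $a \in S(\sigma(i(1))) \cap S(\sigma(i(2))) \cap S(\sigma(i(3)))$, which is the required triple; the stated Fourier lower bound matches the corollary's up to the harmless factor $(2k)^{2^k}$ versus $k^{2^k}$.

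The proof is mostly bookkeeping: all the analytic work is in \theoremref{corrbound_main} and its corollaries, and the only genuine step is identifying the AP distribution as pairwise independent with balanced marginals (immediate from $k \le p$). The one minor care needed is tracking the polynomial-in-$k$ and exponential-in-$d$ constants so that the stated $\delta$ absorbs all slack factors from the chain \theoremref{corrbound_main} $\to$ \corolref{invariance_expectation} $\to$ \corolref{invariance_expectation3}.
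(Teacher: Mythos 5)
Your overall plan is right and matches the paper's intent: recognize the arithmetic-progression distribution as a pairwise-independent product distribution with uniform marginals on $\Z_p^n$ (immediate from the $2\times 2$ determinant argument once $k\le p$), rewrite the hypothesis as a noise-correlation bound since the $Y_i$ are independent, and then invoke \theoremref{corrbound_main} together with its corollaries (the paper itself cites \theoremref{corrbound_main} and \corolref{invariance_expectation3} before the proposition and gives no further proof).

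However, your Part 1 argument has a genuine gap. You assume ``every $f_i$ is $\delta$-uniform'' and derive a contradiction via \corolref{invariance_expectation}; this establishes only that \emph{not all} $f_i$ are $\delta$-uniform, i.e.\ that at least one has a large Fourier coefficient. The proposition asserts the much stronger claim that \emph{none} of the $f_i$ are $\delta$-uniform, i.e.\ that \emph{every} $f_i$ has a large Fourier coefficient. \corolref{invariance_expectation} cannot give this, since its $\delta$ is a maximum over $k-2$ of the functions and its conclusion is compatible with any one fixed $f_j$ being uniform. The correct argument fixes an arbitrary $j$, permutes so that $f_j$ is placed first (this preserves pairwise independence and leaves $C$ and $\degtwo$ unchanged), and applies \theoremref{corrbound_main} directly to get $|\noiseprod{f_1,\ldots,f_k}|\le C^{D}\delta\prod_{i\ne j}\|f_i\|_2\le C^D\delta$ with $D\le (k-2)d$, where $\delta=\max_\sigma|\hat f_j(\sigma)|$. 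One then also needs the observation that $\delta$-uniformity bounds the \emph{constant} coefficient, so $|\E[f_j]|=|\hat f_j(\vect 0)|\le\delta$ and hence $|\prod_i\E[f_i]|\le\delta$; combining, $|\noiseprod{f_1,\ldots,f_k}-\prod_i\E[f_i]|\le(C^D+1)\delta\le\eps$, the desired contradiction. This second observation is exactly what makes ``none'' rather than ``not all'' come out, and it is absent from your write-up.

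Part 2 is structurally correct: $C^D\delta'\le\eps$ triggers \corolref{invariance_expectation3}, the union of the supports is nonempty since each $\sigma(i)\ne 0$, and any coordinate in that union yields three overlapping supports. But you should not wave away the constant mismatch as ``harmless'': \corolref{invariance_expectation3} as stated gives $(\delta'/(2k))^{2^k}$, which is a factor $2^{2^k}$ \emph{weaker} than the proposition's $(\delta'/k)^{2^k}$. To recover the proposition's constant one has to use the sharper quantity $\delta_r=\delta^{2^{r-1}}/(2k)^{2^r-1}$ from inside the corollary's proof (or accept that the proposition's displayed constant is slightly optimistic); as written, your derivation proves a slightly weaker statement than claimed.
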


We note that the two results above may be interpreted as certain types
of derandomization results which can be defined in further
generality. The basic setup is that there are $2k$ vectors
$X_1,\ldots,X_k$ and $Y_1,\ldots,Y_k$. All of the vectors have the
same distribution which is uniform in some product space $\Omega^n$.
However, the $Y_i$'s are independent while the $X_i$'s are only pairwise
independent. How can the two distributions be distinguished? One way to
distinguish is to consider functions $f_i$ of $X_i$ (resp.~$Y_i$) and to
show that $\prod_{i=1}^k f_i(X_i)$
is far in expectation from $\prod_{i=1}^k f_i(Y_i)$. Our
results show that if the functions $f_i$ are uniform and of low degree
then it is impossible to have such a distinguisher.

We finally note that for all the applications considered here, the
results hold assuming the function is close in the $k$'th norm to
function of low degree by \propref{holder}.

\section{Possible Extensions}
\sectlabel{ext}

We briefly discuss some comments regarding possible extensions of the
main result.

\subsection{Invariance}

The result of~\cite{Mossel:08} shows under stronger conditions the {\em
invariance} of the functions $f_1,\ldots,f_k$.  In other words: they
show that the distribution of $(f_1,\ldots,f_k)$ under the pairwise
distribution is close to the distribution under the product
distribution with the same marginals as $\mu$.

\ignore{ One would not expect that such a strong conclusion will hold
here.  Indeed if $f : \Omega^n \to \R$ is a function with low
influences and $\psi : \R \to \R$ is a ``nice'' functions then the
functions $\psi \circ f$ also has low influences.  However, if $f$ has
low Fourier coefficients - this does not have to be the case for $\psi
\circ f$.  For example consider the function $f : \{-1,1\}^n \to \R$
defined by $(x_1-1)(x_2+...+x_n)/n^{1/2}$.  Then all of the
coefficient of the functions are small with respect to the standard
basis.  However if we look at the function $f^2$ say, then it has a
strong correlation with $x_1$.  Moreover it is possible to find
functions $\psi$ such that $\psi \circ f$ is ``close'' to $x_1$.  So
while it is true that $\E[\prod_{i=1}^k f(X_i)]$ is close to $0$ in
the setup of the current paper - it is easy to construct example where
$\E[\prod_{i=1}^k (\psi \circ f)(X_i)]$ is not close to $\prod_{i=1}^k
\E[\psi \circ f]$.  }

One would not expect that such a strong conclusion will hold here.
Consider for instance the following example.  Let $f : \{-1,1\}^n \to
\R$ be defined by $f(x) = (x_1-1)(x_2+...+x_n)/n^{1/2}$.  Then $f$ has
Fourier degree $2$, variance $\Theta(1)$, and coefficients of order $n^{-1/2}$.  Define a
distribution $\mu$ on triples of strings $(x,y,z) \in (\{-1,1\}^n)^3$,
by letting, for each $i \in [n]$, the distribution on the $i$'th
coordinate be the uniform distribution over $(x_i,y_i,z_i)$ satisfying
$x_i \cdot y_i \cdot z_i = 1$.  Then $\mu$ is balanced pairwise
independent.  Now consider the distribution of $(f(x), f(y), f(z))$,
compared to the distribution of $(f(\tilde x), f(\tilde y), f(\tilde
z))$ for $\tilde x$, $\tilde y$ and $\tilde z$ independent uniformly
random strings of $\{-1,1\}^n$.  The distribution of $(f(x), f(y),
f(z))$ is supported only on points where at least one of the
coordinates is $0$ (since one of $x_1$, $y_1$, $z_1$ is always $1$).
On the other hand, the distribution of $(f(\tilde x), f(\tilde y),
f(\tilde z))$ has an $\Omega(1)$ fraction of its support on points
such that all three of $|f(\tilde x)|$, $|f(\tilde y)|$, and
$|f(\tilde z)|$ are lower bounded by $\Omega(1)$.  Hence the two
distributions are not close, even though the Fourier coefficients of
$f$ can be made arbitrarily small by increasing $n$.

The same reasoning shows that we cannot hope for invariance even if
all moments on up to $k-1$ variables match.  E.g., even if $X_1,
\ldots, X_k$ are $(k-1)$-wise independent it is not necessarily the
case that the distribution of $(f(X_1), \ldots, f(X_k))$ is close to a
product distribution.

\subsection{Relaxed Degree Conditions}

\sectlabel{relaxeddegree}

As mentioned before, previous work~\cite{MoOdOl:09,Mossel:09}
established results of the type discussed here by first deriving the
results for low degree polynomials and then applying ``truncation
arguments'' to obtain results for general bounded functions. It seems
that in the context of the current paper these truncation arguments
are more challenging.

Indeed, it is well-known that in general, large Gowers norm does not imply
large Fourier coefficients (consider e.g.\ the function $f(X) =
(-1)^{\sum_{i=1}^{n-1} x_ix_{i+1}}$ over $\Z_2^n$), and hence one can
not hope to drop the requirement of small Fourier degree and
generalize our theorem to general bounded functions.

However, improvements are still possible.  First, it is possible that
under additional conditions on the pairwise independent marginal
distributions, the requirement on low Fourier degree can be dropped
completely.  We discuss this below.

A second, closely related possible improvement, is to slightly relax
the strong Fourier degree requirements.  In particular, one can hope
that a similar bound can be derived for functions with exponentially
small Fourier tails, i.e., functions $f$ such that the total Fourier
mass on the high-degree part decays exponentially, $\|f^{>d}\|_2^2 \le
(1-\gamma)^d$ for some $\gamma > 0$.  Such functions arise naturally
in many applications, e.g., when functions are evaluated on slightly
noisy inputs.  Hence, it is natural to ask whether the following
extension of our result can be true:

\begin{question}
  \questlabel{extension}
  Let $(\Omega, \mu)$ be a pairwise
  independent product space $\Omega = \Omega_1 \times \ldots \times
  \Omega_k$.  Is it true
  that for every $\gamma > 0$ and $\epsilon > 0$, there exists a
  constant $\delta := \delta(\gamma, \epsilon) > 0$ such that the
  following holds?  If $f_1, \ldots, f_k$ are functions $f_i \in
  L^2(\Omega_i^n, (\mu_i)^{\tensor n})$ satisfying
  \begin{itemize}
  \item For every $i \in [k]$, $\|f_i\|_{\infty} \le 1$.
  \item For every $d \in [n]$, $\|f_i^{\ge d}\|_2^2 \le (1-\gamma)^d$.
  \item For every $\sigma \in \Z_q^n$, $|\hat{f_1}(\sigma)| \le \delta$.
  \end{itemize}
  Then
  $$
  \noiseprod{f_1, \ldots, f_k} \le \epsilon.
  $$
\end{question}

An affirmative answer to \questref{extension} would also have
consequences for completely dropping the degree requirement under
additional conditions on the marginal distributions.

In particular, for marginal distributions whose support is
\emph{connected} in the sense described in \sectref{smoothness}, by~\cite{Mossel:09} it is
known that applying a small amount of noise to each of the functions
$f_1, \ldots, f_k$ does not change $\noiseprod{f_1, \ldots, f_k}$ by
much.

Since applying noise gives exponentially decaying Fourier
tails, an affirmative answer to \questref{extension} implies that for
connected marginal distributions, the condition on the Fourier degree
of the functions can be dropped completely.

The statement of \questref{extension} allows for much weaker bounds on
the error $\epsilon$ than we had in \theoremref{corrbound_main},
where the error bound was of the form $\lambda(d, \delta) \cdot
\prod_{i=2}^k \|f_i\|_2$ (where $\lambda(d,\delta) = \delta C^d$).
One cannot hope for such a strong error bound in the setting of
\questref{extension} (with $\lambda(d, \delta)$ replaced by some
function $\lambda(\gamma, \delta)$ depending on the rate of decay of
the Fourier tails, rather than the degree), as illustrated by the
following example communicated to us by Hamed Hatami, Shachar Lovett,
Alex Samorodnitsky and Julia Wolf: consider a pairwise independent
distribution $\mu$ on $\{0,1\}^k$ in which the first $\approx \log k$
bits are chosen uniformly at random, and the remaining bits are sums
of different subsets of the first $\log k$ bits.  This distribution is
not connected in the sense described above, but that can easily be
arranged by adding a small amount of noise to $\mu$, which will not
have any significant impact on the calculations which follow.  Let $f:
\{0,1\}^n \rightarrow \{0,1\}$ be the function which returns $1$ on
the all-zeros string, and $0$ otherwise.  Then, one has that
$$
\noiseprod{f, \ldots, f} = \Pr[X_1 = \ldots = X_k = 0] \approx 2^{-n \log k},
$$ whereas $\|f\|_2 = 2^{-n/2}$ and hence the product
$\prod_{i=2}^k \|f\|_2$ equals $2^{-n(k-1)/2}$ so that
$$
\lambda(\gamma, \delta) \cdot \prod_{i=2}^k \|f\|_2 =
\lambda(\gamma, \delta) 2^{-n(k-1)/2} \ll \noiseprod{f, \ldots, f}.
$$
One may argue that it is more reasonable to bound $\noiseprod{f_1,
\ldots, f_k}$ in terms of e.g.\ the $\ell_k$ norms of the $f_i$'s rather
than the $\ell_2$ norms.  We do not know of any counterexample to such
a strengthening of \questref{extension}.

\subsection{A partial solution to ~\questref{extension} by Hamed Hatami}
We were recently informed by Hamed Hatami (personal communication) that \questref{extension}
admits a positive answer in the case where $\mu$ is the support of $(L_1,\ldots,L_k)$ where the $L_i$ are
distinct linear forms over the same additive groups.
This follows since given a value of $\eps$ we may choose $d$ large enough so that $\| f_1^{\geq d} \|_2 \leq \eps/2$ so applying Cauchy-Schwartz yields that
\[
\noiseprod{f_1^{\geq d}, f_2, \ldots, f_k}\leq \eps/2.
\]
On the other hand applying the Gowers-Cauchy-Schwartz inequality using the fact that $f_2,\ldots,f_k$ are bounded one can obtain:
\[
\noiseprod{f_1^{< d}, f_2, \ldots, f_k} \leq \|f_1^{<d}\|_{U^{k-1}},
\]
Thus choosing $\delta(\eps,d)$ sufficiently small and using the main result of the paper we obtain that the last quantity is at most $\eps/2$.

\section{Acknowledgments}

We are grateful to Hamed Hatami, Shachar Lovett, Alex Samorodnitsky
and Julia Wolf for communicating us the example in
\sectref{relaxeddegree}.  We would also like to thank Madhur Tulsiani
for stimulating and helpful discussions.  We thank Hamed Hatami for providing the partial solution
to \questref{extension}.
Finally, we are
grateful to Ryan O'Donnell and the two anonymous referees for their many comments which
improved the presentation of the paper.

\bibliographystyle{abbrv}
\bibliography{all,my,references}

\begin{thebibliography}{10}

\bibitem{austrin09approximation}
P.~Austrin and E.~Mossel.
\newblock Approximation resistant predicates from pairwise independence.
\newblock {\em Computational Complexity}, 18(2):249--271, 2009.

\bibitem{benjamini99noise}
I.~Benjamini, G.~Kalai, and O.~Schramm.
\newblock {Noise sensitivity of boolean functions and applications to
  percolation}.
\newblock {\em Inst. Hautes Études Sci. Publ. Math}, 90:5--43, 1999.

\bibitem{FurstenbergWeiss:96}
H.~Furstenberg and B.~Weiss.
\newblock A mean ergodic theorem for {$(1/N)\sum^N_{n=1}f(T^nx)g(T^{n^2}x)$}.
\newblock In {\em Convergence in ergodic theory and probability ({C}olumbus,
  {OH}, 1993)}, volume~5 of {\em Ohio State Univ. Math. Res. Inst. Publ.},
  pages 193--227. de Gruyter, Berlin, 1996.

\bibitem{gowers10true}
T.~Gowers and J.~Wolf.
\newblock {The true complexity of a system of linear equations}.
\newblock {\em Proceedings of the London Mathematical Society},
  100(1):155--176, 2010.

\bibitem{Gowers:98}
W.~T. Gowers.
\newblock A new proof of {S}zemer\'edi's theorem for arithmetic progressions of
  length four.
\newblock {\em Geom. Funct. Anal.}, 8(3):529--551, 1998.

\bibitem{Gowers:01}
W.~T. Gowers.
\newblock A new proof of {S}zemer\'edi's theorem.
\newblock {\em Geom. Funct. Anal.}, 11(3):465--588, 2001.

\bibitem{GreenTao:08}
B.~Green and T.~Tao.
\newblock The primes contain arbitrarily long arithmetic progressions.
\newblock {\em Ann. of Math. (2)}, 167(2):481--547, 2008.

\bibitem{khot02power}
S.~Khot.
\newblock On the power of unique 2-prover 1-round games.
\newblock In {\em ACM Symposium on Theory of Computing (STOC)}, pages 767--775,
  2002.

\bibitem{khot07optimal}
S.~Khot, G.~Kindler, E.~Mossel, and R.~O'Donnell.
\newblock Optimal inapproximability results for max-cut and other 2-variable
  csps?
\newblock {\em Siam Journal on Computing}, 37:319--357, 2007.

\bibitem{Lindeberg:22}
J.~W. Lindeberg.
\newblock Eine neue herleitung des exponential-gesetzes in der
  wahrscheinlichkeit srechnung.
\newblock {\em Math. Zeit.}, 15:211--235, 1922.

\bibitem{Mossel:08}
E.~Mossel.
\newblock Gaussian bounds for noise correlation of functions and tight analysis
  of long codes.
\newblock In {\em Foundations of Computer Science, 2008 (FOCS 08)}, pages
  156--165. IEEE, 2008.

\bibitem{Mossel:09}
E.~Mossel.
\newblock Gaussian bounds for noise correlation of functions.
\newblock submitted. Available at Arxiv math/0703683, 2009.

\bibitem{MoOdOl:09}
E.~Mossel, R.~O'Donnell, and K.~Oleszkiewicz.
\newblock Noise stability of functions with low influences: invariance and
  optimality.
\newblock To appear in Ann. Math., 2009.

\bibitem{odonnell03computational}
R.~O'Donnell.
\newblock {\em Computational applications of noise sensitivity}.
\newblock PhD thesis, Massachusetts Institute of Technology, 2003.

\bibitem{raghavendra08optimal}
P.~Raghavendra.
\newblock Optimal algorithms and inapproximability results for every csp?
\newblock In {\em ACM Symposium on Theory of Computing (STOC)}, 2008.

\bibitem{Rotar:79}
V.~I. Rotar$'$.
\newblock Limit theorems for polylinear forms.
\newblock {\em J. Multivariate Anal.}, 9(4):511--530, 1979.

\bibitem{roth53certain}
K.~F. Roth.
\newblock On certain sets of integers.
\newblock {\em J. London Math. Soc.}, 28:245--252, 1953.

\bibitem{samorodnitsky05gowers}
A.~Samorodnitsky and L.~Trevisan.
\newblock Gowers uniformity, influence of variables, and {PCP}s.
\newblock In {\em ACM Symposium on Theory of Computing (STOC)}, pages 11--20,
  2006.

\bibitem{szemeredi75sets}
E.~Szemerédi.
\newblock {On sets of integers containing no $k$ elements in arithmetic
  progression}.
\newblock {\em Acta Arith.}, 27:299--345, 1975.

\end{thebibliography}

\end{document}